\newtheorem{theorem}{Theorem}[section]
 \newtheorem{proposition}[theorem]{Proposition}
 \newtheorem{corollary}[theorem]{Corollary}
 \newtheorem{definition}[theorem]{Definition}
 \def\RR{{\mathbb R}}
 \def\NN{{\mathbb N}}
 \def\SS{{\mathbb S}}
 \def\ZZ{{\mathbb Z}}
 \def\SF{{\mathscr F}}
 \title{\bf The Smoothness of  kernel in  Hardy spaces
 \thanks{1} \footnote{E-mail: huzhuoran010@163.com[ZhuoRan Hu].}}
\author{{ZhuoRan Hu}\\
{\small }\\
{\small Beijing 100048, China}}
\begin{document}
\maketitle \setcounter{page}{1} \pagestyle{myheadings}
 \markboth{Hu}{The Smoothness of  kernel in  Hardy spaces}
\begin{abstract}
This paper provides a study of  problems related to Hardy spaces left by G.\,Weiss in \cite{We}.
First, We will prove that the Hardy spaces $H^p(\RR^n)$ can be characterized by a fixed Lipschitz function.

\vskip .2in
 \noindent
 {\bf 2000 MS Classification:}
 \vskip .2in
 \noindent
 {\bf Key Words and Phrases:}  Hardy spaces, Kernel, Lipschitz function
 \end{abstract}

\setcounter{page}{1}

\section{Introduction}
Fefferman and Stein in \cite{CF} showed that $H^p(\RR^n)$ can be defined as follows:
\begin{theorem}\label{abc7}
For $0<p\leq\infty$, let $f$ be a distribution, then the following conditions are equivalent:

(1)There is a $\phi\in S(\RR^n)$ with $\int \phi(x) dx\neq0$ so that $M_\phi f\in L^p(\RR^n)$

(2)The distribution $f$ is a bounded distribution and $\sup_{|u-x|<t}(f*P_t)(u)\in L^p(\RR^n)$.

(3)$M_Ff(x)=\sup_{\phi\in S_{F}}\sup_{t>0}\left(f\ast\phi_t\right)(x)\in L^p(\RR^n)$, where $F=\{\|\cdot\|_{a, b}\}$ is any finite collection of seminorms on $S(\RR^n)$, and $S_{F}$ is the subset of $S(\RR^n)$ controlled by
this collection of seminorms:$$S_{F}=\left\{\phi\in S(\RR^n): \|\phi\|_{a, b}\leq1\  \hbox{for}\  \hbox{any}\  \|\cdot\|_{a, b}\in F \right\}.$$
\end{theorem}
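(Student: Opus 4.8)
The plan is to prove the equivalence as a cycle, with the genuinely hard content concentrated in the comparison of the single-function maximal operator $M_\phi$ with the grand maximal operator $M_F$; once $(1)\Leftrightarrow(3)$ is established, condition $(2)$ is attached by comparing the Poisson kernel to the Schwartz framework. First I would normalize. Since $F$ is a \emph{finite} collection of seminorms, any fixed $\phi_0\in S(\RR^n)$ with $\int\phi_0\neq0$ can be divided by the constant $C=\max_{\|\cdot\|_{a,b}\in F}\|\phi_0\|_{a,b}<\infty$ so that $\phi:=\phi_0/C$ satisfies $\|\phi\|_{a,b}\le1$ for all seminorms in $F$, i.e. $\phi\in S_F$. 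Then $M_\phi f\le M_F f$ pointwise and $M_{\phi_0}f=C\,M_\phi f\le C\,M_F f$, so $(3)\Rightarrow(1)$ follows at once. The substance is the reverse inequality $\|M_F f\|_{L^p}\lesssim\|M_\phi f\|_{L^p}$.

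For the hard direction $(1)\Rightarrow(3)$ I would introduce the nontangential maximal function $M_\phi^* f(x)=\sup_{|y-x|<t}|f*\phi_t(y)|$ and, for a parameter $N>0$, the tangential (Fefferman--Stein) maximal function
$$M_{\phi,N}^{**}f(x)=\sup_{t>0,\,y\in\RR^n}\frac{|f*\phi_t(y)|}{(1+|x-y|/t)^N}.$$
The scheme has two steps. The analytic core is the bound $\|M_{\phi,N}^{**}f\|_{L^p}\lesssim\|M_\phi f\|_{L^p}$, valid once $N=N(p,n)$ is taken large enough. I would prove it (after a standard truncation of $M_{\phi,N}^{**}$ to guarantee a priori finiteness) by dominating $M_{\phi,N}^{**}f$ by a power strictly less than $1$ of the Hardy--Littlewood maximal function of $M_\phi f$, so that the Hardy--Littlewood maximal theorem applies after taking $L^p$ norms; the key ingredients are the Schwartz decay of $\phi$ and its derivatives together with a differential inequality trading the supremum over $y$ at scale $t$ against the radial supremum. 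The fact that $p$ may be small forces the exponent strictly below $1$, and hence forces $N$ large, and I expect this to be the main obstacle.

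The second step of $(1)\Rightarrow(3)$ is the pointwise domination $M_F f(x)\lesssim M_{\phi,N}^{**}f(x)$. Here I would use a reproducing formula: since $\int\phi\neq0$ we have $\widehat\phi(0)\neq0$, and one can construct $\Psi\in S(\RR^n)$ depending only on $\phi$ so that every $\psi\in S_F$ is expressed as a controlled superposition of dilates of $\phi$, schematically $f*\psi_s=\int_0^\infty (f*\phi_t)*\Lambda_{s,t}\,\frac{dt}{t}$, where the kernels $\Lambda_{s,t}$ inherit rapid decay and sufficiently many vanishing moments uniformly for $\psi\in S_F$. Estimating this integral against the weight $(1+|x-\cdot|/t)^{-N}$ yields $M_F f(x)\le C\,M_{\phi,N}^{**}f(x)$ with $C$ depending only on $F,\phi,n$. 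Combining the two steps closes $(1)\Rightarrow(3)$, hence $(1)\Leftrightarrow(3)$; along the way one checks that finiteness of $M_\phi f$ forces $f$ to be a bounded distribution, so the Poisson extension in $(2)$ is well defined.

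Finally, to incorporate $(2)$ I would compare the Poisson kernel $P$ with this Schwartz machinery. Although $P\notin S(\RR^n)$, it and its derivatives decay like $|x|^{-n-1}$, so after a harmless splitting $P=P\chi+P(1-\chi)$ the nontangential Poisson maximal function of $(2)$ is dominated pointwise by $M_{\phi,N}^{**}f$ for suitable $N$, giving $(3)\Rightarrow(2)$; conversely, running the reproducing formula with the Poisson semigroup (or using subordination of $P$ to the Gaussian, which lies in $S(\RR^n)$) yields $(2)\Rightarrow(1)$. The boundedness hypothesis on $f$ in $(2)$ is exactly what makes $f*P_t$ meaningful, and it is recovered for free from $(1)$ or $(3)$. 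This closes the cycle and establishes the mutual equivalence of $(1)$, $(2)$, and $(3)$.
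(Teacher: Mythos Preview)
The paper does not prove Theorem~\ref{abc7}. It is stated in the Introduction as a result of Fefferman and Stein, with the citation \cite{CF}, and is used only as background motivation for the paper's own results (Theorems~\ref{pp3} and~\ref{upoo}). There is therefore no ``paper's own proof'' to compare your proposal against.

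That said, your outline is a faithful sketch of the standard Fefferman--Stein argument as presented in \cite{CF} and in Chapter~III of \cite{Stein}: the cycle $(3)\Rightarrow(1)$ trivially, $(1)\Rightarrow(3)$ via the tangential maximal function $M_{\phi,N}^{**}$ and a Calder\'on-type reproducing formula, and the Poisson comparison for $(2)$. One technical point worth flagging: in your Step~1 you write that you would dominate $M_{\phi,N}^{**}f$ by a fractional power of the Hardy--Littlewood maximal function of the \emph{radial} maximal function $M_\phi f$. In the actual argument the pointwise bound is against $\big[M\big((M_\phi^* f)^q\big)\big]^{1/q}$ with the \emph{nontangential} maximal function $M_\phi^*$, and a separate good-$\lambda$ or level-set argument is needed to pass from $M_\phi^*$ back down to $M_\phi$. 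This extra step is not difficult but is genuinely required, and your sketch elides it. Apart from that, the plan is sound and matches the literature.
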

They also discussed the minimal conditions on $\phi$ so that  $M_\phi f\in L^p(\RR^n)$ with $\|M_\phi f\|_{L^p(\RR^n)}\lesssim\|f\|_{H^p(\RR^n)}$  whenever $f\in H^p$ $(p\leq1)$.

(a)For $\phi$ that have compact support, it suffices to have $\phi\in \Lambda^{\gamma} $ for some $\gamma>n(p^{-1}-1)$.

(b)For $\phi$ not having compact support but vanishing at infinity and satisfying $\left|\partial_x^\gamma \phi(x)\right|\lesssim (1+|x|)^{-N}$ for  $|\gamma|=[n(p^{-1}-1)]+1$, it suffices to have $Np>n$.

From \cite{DL} and  \cite{TB}, the Lipchitz spaces $ \Lambda^{\gamma}$ can be paired with $H^p$ if $\gamma=[n(p^{-1}-1)] $ $(0<p<1)$. That is for any $f\in H^p $, the following holds:
\begin{eqnarray}\label{abc5}
\|f\|_{H^p}= \sup_{\|g\|_{\Lambda^{\gamma}}\leq1}\left|\int f(x)g(x)dx\right|.
\end{eqnarray}
 From \cite{MS2},  $H^p(\RR^n)$ ($\frac{1}{1+\gamma}<p\leq1$) spaces can also   be defined as:
\begin{eqnarray}\label{abc6}
\|f\|^p_{H^p(\RR^n)}=\int_{\RR^n}\left|f_{\gamma}^*(x)\right|^pdx,
\end{eqnarray}
where $f\in \left(\Lambda^{\gamma}\right)'$, and $f_{\gamma}^*(x)$ is defined by\,(\ref{ssr3}).

From (\ref{abc5}), (\ref{abc6})  and Theorem\,\ref{abc7}, we could see that the smoothness of the kernel in Theorem\,\ref{abc7} may be reduced. Thus the problem reducing smoothness of $\phi$ in Hardy spaces was proposed by some mathematicians. The example that $\phi=\chi_{[-\frac{1}{2}, \frac{1}{2}]}(x)$ shows that the assumption of smoothness of $\phi$ can not be removed in the definition of $H^p$. This shows that the Hardy-Littlewood maximal function $Mf$ can not characterize any $H^p$ for $0<p\leq1$. Thus we wish to replace the Schwartz function $\phi$ in Theorem\,\ref{abc7}   by  a fixed Lipschitz function.

 Our  \textbf{first  result} is Theorem\,\ref{pp3}. We will prove the following for $\frac{1}{1+\gamma}<p\leq1$:
\begin{eqnarray}\label{abc8}
\|f\|_{H^p(\RR^n)}\sim_{p, \gamma, \phi}\|(f*\phi)_{\nabla}\|_{L^p(\RR^n)}\sim_{ p,  \gamma, \phi} \|(f*\phi)_{+}\|_{L^p(\RR^n)},
\end{eqnarray}
where  $\phi$ is a fixed  Lipschitz function with compact support satisfying $\int \phi(x)dx\sim1$.  $(f*\phi)_{\nabla}(x)$ and $(f*\phi)_{+}(x)$ are    non-tangential maximal function and radial maximal function  defined as:
\begin{eqnarray*}
(f*\phi)_{\nabla}(x)=  \sup_{ |x-u|< t}  \left|f*\phi_t(u)\right|,\  (f*\phi)_{+}(x)=  \sup_{ t>0} \left| f*\phi_t(x)\right|,\ \hbox{where}\ \  \displaystyle{\phi_t(x)=t^{-n}\phi\left(xt^{-1}\right)}.
\end{eqnarray*}  From the above (\ref{abc8}), we could also see that the norm of  $(f*\phi)_{+}$ and the norm of  $(f*\phi)_{\nabla}$ are equivalent when   $\phi\in \Lambda^{\gamma}$  with a compact support.  (\ref{abc8}) is different to (\ref{abc6}) that  $\phi$ in (\ref{abc6}) is not  fixed.

 In 1983, Han in \cite{H8} gave another characterization of  $H^1_\phi(\RR)$ with Carleson measure.
Han also proved that there exists a $\phi$ which is a  Lipschitz function
satisfying the Formulas\,(\ref{lp3}, \ref{lp4}) so that  $(f*\phi)_{\nabla}\in L^1(\RR)\Rightarrow f(x)=0\,a.e.x\in\RR$.
\begin{eqnarray}\label{lp3}
\left|\phi(x)\right|\lesssim \frac{1}{(1+|x|)^{1+\gamma}},
\end{eqnarray}
\begin{eqnarray}\label{lp4}
\left|\phi(x+h)-\phi(x)\right|\lesssim \frac{|h|^{\gamma}}{(1+|x|)^{1+2\gamma}},\ \ \hbox{if}\ |h|\lesssim|x|/2.
\end{eqnarray}
when $0<\gamma\leq1$. However, when we replace the Formulas\,(\ref{lp3}, \ref{lp4}) with Formulas\,(\ref{lp1}, \ref{lp2}), we could deduce the Proposition\,\ref{lp51}: $f\in H^p(\RR^n)\Rightarrow (f*\phi)_{\nabla}\in L^p(\RR^n)$ for $\frac{1}{1+\gamma}<p\leq1$.

Our  \textbf{second  result} is Theorem\,\ref{upoo}, that  the following  holds for $\frac{1}{1+\gamma}<p\leq1$:
\begin{eqnarray}\label{abc88}
\|f\|_{H^p(\RR^n)}\sim_{p, \gamma, \phi}\|(f*\phi)_{\nabla}\|_{L^p(\RR^n)}\sim_{ p,  \gamma, \phi} \|(f*\phi)_{+}\|_{L^p(\RR^n)},
\end{eqnarray}
where  $\phi$ is a fixed  Lipschitz function without compact support satisfying Formulas\,(\ref{up1}, \ref{up2}) and $\int \phi(x)dx\sim1$.

\textbf{Notation}: Let $S(\RR^n)$ be the space of  $C^{\infty}$ functions on $\RR^n$ with the Euclidean distance rapidly decreasing together with their derivatives\,(Schwartz Class), $S'(\RR^n)$ the tempered distributions. In the following of this paper, we assume that $0<\gamma\leq1$,
$\alpha\in\NN^n$ satisfying:
$$\alpha=(\alpha_i)_{i=1}^{n},\ \ \ \hbox{where}\, \alpha_i\in\NN,\ \ \hbox{and}\,|\alpha|=\sum_{i=1}^n\alpha_i.$$
We use $\SS^n$ to denote the unit sphere in $\RR^{n+1}$, $B(x, r)$  to denote  the set: $B(x, r)=\{y: |x-y|<r\}.$ $B(x, r_1)\backslash B(y, r_2)$ is denoted as the set: $B(x, r_1)\bigcap B(y, r_2)^c.$

\section{ Lipschitz function with compact support in $\RR^n$}

\begin{definition}[The Lipschitz function]
For $\phi\in C(\RR^n)$, $n\in \NN$,
$H^{\gamma}(\phi)$ is denoted as:
$$H^{\gamma}(\phi)=\sup_{x, y\in\RR^n, x\neq y}|\phi(x)-\phi(y)|/|x-y|^{\gamma};$$
The Lipschitz function $\Lambda^{\gamma}$ is defined as $\Lambda^{\gamma}=\{f: \sup_{x\in\RR^n}|f(x-y)-f(x)|\leq C |y|^{\gamma}\},$
and $\left(\Lambda^{\gamma}\right)'$ is  denoted as the dual space of $\Lambda^{\gamma}$.
\end{definition}
For $f\in \left(\Lambda^{\gamma}\right)'$, the maximal function  $f_{\gamma}^*(x)$ in $\RR^n$ is defined as:
\begin{eqnarray}\label{ssr3}
f_{\gamma}^*(x)=\sup_{\phi,r}\left\{\left|\int_{\RR^n} f(y)\phi(y)dy\right|/r^n:r>0,{\rm
supp}\,\phi\subset B(x, r), H^\gamma(\phi) \leq r^{-\gamma}, \|\phi\|_{L^{\infty}}\leq1 \right\}.
\end{eqnarray}

For  $f\in S'(\RR^n)$, $f_{S\gamma}^*(x)$ is defined as:
\begin{eqnarray*}f_{S\gamma}^*(x)=\sup_{\phi,r}\left\{\left|\int_{\RR^n} f(y)\phi(y)dy\right|/r^n:r>0,{\rm
supp}\,\phi\subset B(x, r), H^\gamma(\phi) \leq r^{-\gamma},  \right. \\ \nonumber \phi\in S(\RR^n), \|\phi\|_{L^{\infty}}\leq1 \bigg\}.
\end{eqnarray*}
or
\begin{eqnarray}\label{mm1}
f_{S\gamma}^*(x)=\sup_{\psi,r>0}\left\{\left|\int_{\RR^n} f(y)\psi\left(\frac{x-y}{r} \right)dy\right|/r^n:  \psi(t)\in S(\RR^n), \right.\\
\nonumber  supp\,\psi(t)\subset B(0, 1), \|\psi\|_{L^{\infty}}\leq1, H^{\gamma}\psi\leq1  \bigg\}.
\end{eqnarray}

\begin{definition}[$\mathbf{M_{\phi}f(x) } $ ]\label{mm2} For $f\in S'(\RR^n)$, $M_{\phi}f(x)$ is defined as
\begin{eqnarray*}
 M_{\phi}f(x)  =\sup_{r>0}\left\{\left|\int_{\RR^n} f(y)\phi\left(\frac{x-y}{r} \right)dy\right|/r^n:  \phi(t)\in S(\RR^n) \right\}.
\end{eqnarray*}
 \end{definition}

\begin{proposition}\cite{Stein} \label{tan} For fixed numbers $0<b\leq a$, F(x, r) is  a function defined on $\RR_+^{n+1}$, its nontangential maximal function $F_a^*(x)$ is defined as
 $F_a^*(x)=\sup_{|x- y|<ar}|F(y, r)|.$ If $F_a^*(x)\in L^1(\RR^n)$ or $F_b^*(x)\in L^1(\RR^n)$, then we could obtain  the following inequality for $p>0$:
 \begin{eqnarray}\label{no1}
 \int_{\RR^n} |F_a^*(x)|^pdx\leq c \left(\frac{a+b}{b}\right)^n \int_{\RR^n} |F_b^*(x)|^pdx ,
 \end{eqnarray}
where c is a constant independent on $F$, a, b.
\end{proposition}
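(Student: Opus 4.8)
The plan is to prove the reverse inequality (the real content of the statement) by passing to the distribution functions of $F_a^*$ and $F_b^*$ and comparing their super-level sets. Since $b\le a$, the cone over which $F_b^*$ is taken is contained in the cone over which $F_a^*$ is taken, so $F_b^*(x)\le F_a^*(x)$ pointwise; hence $\int|F_b^*|^p\le\int|F_a^*|^p$ is automatic and the task is the opposite bound with the explicit geometric constant. Fix $\lambda>0$ and set $E_\lambda^a=\{x:F_a^*(x)>\lambda\}$ and $E_\lambda^b=\{x:F_b^*(x)>\lambda\}$. A first observation is that both sets are open: if $F_a^*(x)>\lambda$ there is a pair $(y,r)$ with $|x-y|<ar$ and $|F(y,r)|>\lambda$, and the strict inequality $|x-y|<ar$ persists for all $x'$ near $x$, so $F_a^*$ is lower semicontinuous. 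This settles measurability and lets me work with $|E_\lambda^a|$ and $|E_\lambda^b|$, which are finite for every $\lambda>0$ under the hypothesis that one of $F_a^*,F_b^*$ lies in $L^1$ (via Chebyshev together with the inclusion $E_\lambda^b\subseteq E_\lambda^a$).

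The heart of the argument is a density estimate relating $E_\lambda^a$ to $E_\lambda^b$ through the Hardy--Littlewood maximal operator $M$. Let $x\in E_\lambda^a$ and choose $(y,r)$ as above with $|x-y|<ar$ and $|F(y,r)|>\lambda$. For any $z$ with $|z-y|<br$ one has $F_b^*(z)\ge|F(y,r)|>\lambda$, so the whole ball $B(y,br)$ lies in $E_\lambda^b$. Moreover $B(y,br)\subseteq B(x,(a+b)r)$, since $|z-y|<br$ forces $|z-x|\le|z-y|+|y-x|<br+ar=(a+b)r$. Comparing volumes gives
\begin{eqnarray*}
\frac{|E_\lambda^b\cap B(x,(a+b)r)|}{|B(x,(a+b)r)|}\ge\frac{|B(y,br)|}{|B(x,(a+b)r)|}=\left(\frac{b}{a+b}\right)^n,
\end{eqnarray*}
so $M(\chi_{E_\lambda^b})(x)\ge(b/(a+b))^n$ for every $x\in E_\lambda^a$. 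Hence $E_\lambda^a\subseteq\{x:M(\chi_{E_\lambda^b})(x)\ge(b/(a+b))^n\}$, and the weak-$(1,1)$ bound for $M$ yields
\begin{eqnarray*}
|E_\lambda^a|\le\frac{c_n}{(b/(a+b))^n}\,\|\chi_{E_\lambda^b}\|_{L^1}=c_n\left(\frac{a+b}{b}\right)^n|E_\lambda^b|,
\end{eqnarray*}
an inequality between level sets carrying exactly the claimed dependence on $a$ and $b$.

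Finally I would integrate in $\lambda$ using the distribution-function identity $\int_{\RR^n}|g(x)|^p\,dx=p\int_0^\infty\lambda^{p-1}\,|\{|g|>\lambda\}|\,d\lambda$, valid for every $p>0$. Applying this to $g=F_a^*$ and inserting the level-set bound gives
\begin{eqnarray*}
\int_{\RR^n}|F_a^*(x)|^p\,dx=p\int_0^\infty\lambda^{p-1}|E_\lambda^a|\,d\lambda\le c_n\left(\frac{a+b}{b}\right)^n p\int_0^\infty\lambda^{p-1}|E_\lambda^b|\,d\lambda=c_n\left(\frac{a+b}{b}\right)^n\int_{\RR^n}|F_b^*(x)|^p\,dx,
\end{eqnarray*}
which is the assertion with $c=c_n$ depending only on the dimension. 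The main obstacle is the density/covering step: one must verify the geometric containment $B(y,br)\subseteq B(x,(a+b)r)$ uniformly in the chosen pair $(y,r)$ and confirm that the resulting pointwise lower bound on $M(\chi_{E_\lambda^b})$ is genuinely independent of $x$ and $r$, so that a single aperture-dependent factor $((a+b)/b)^n$ is produced. Once that is in place, the weak-$(1,1)$ inequality (absorbing the passage from $\ge$ to $>$ in the level into the constant $c_n$) and the distribution-function formula are routine.
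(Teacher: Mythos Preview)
Your argument is correct and is precisely the standard proof of this aperture-comparison lemma: compare super-level sets via the density bound $M(\chi_{E_\lambda^b})(x)\ge(b/(a+b))^n$ on $E_\lambda^a$, apply the weak-$(1,1)$ inequality for $M$, and integrate the distribution function. The paper does not supply its own proof of this proposition; it simply cites it from Stein~\cite{Stein}, where exactly this argument appears. So there is nothing to compare against beyond noting that your write-up matches the classical source.
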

\begin{proposition}\label{sk}
For $f\in L^1(\RR^n)$, $0<p<\infty$ we could obtain
$  f_{S\gamma}^*(x)= f_{\gamma}^*(x)\ \ \ a.e.x\in\RR^n .$
Further more, if
$ \int_{\RR^n} |f_{\gamma}^*(x)|^p dx\leq \infty $ or $ \int_{\RR^n} |f_{S\gamma}^*(x)|^p dx\leq \infty $,
then the following holds
 $$\int_{\RR^n} |f_{\gamma}^*(x)|^p dx \sim \int_{\RR^n} |f_{S\gamma}^*(x)|^p dx.$$
\end{proposition}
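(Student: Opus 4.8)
The plan is to establish the pointwise identity $f_\gamma^*(x)=f_{S\gamma}^*(x)$ for every $x\in\RR^n$, from which the $L^p$ comparison is immediate (indeed it then holds with constant one). One inequality is free: every test function admissible for $f_{S\gamma}^*$, namely $\phi\in S(\RR^n)$ with $\mathrm{supp}\,\phi\subset B(x,r)$, $H^\gamma(\phi)\le r^{-\gamma}$ and $\|\phi\|_{L^\infty}\le1$, is also admissible for $f_\gamma^*$, since the definition of $f_\gamma^*$ merely drops the requirement $\phi\in S(\RR^n)$. The supremum defining $f_{S\gamma}^*$ is thus taken over a subclass, and therefore $f_{S\gamma}^*(x)\le f_\gamma^*(x)$ for all $x$.

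For the reverse inequality I would approximate an arbitrary admissible $\phi$ for $f_\gamma^*$ by smooth, compactly supported functions via mollification. Fix $(\phi,r)$ with $\mathrm{supp}\,\phi\subset B(x,r)$, $H^\gamma(\phi)\le r^{-\gamma}$, $\|\phi\|_{L^\infty}\le1$; note that $\phi$ is $\gamma$-H\"older, hence uniformly continuous. Let $\eta_\varepsilon$ be a standard nonnegative mollifier in $C_c^\infty(B(0,\varepsilon))$ with $\int\eta_\varepsilon=1$, and set $\phi_\varepsilon=\phi*\eta_\varepsilon$. Then $\phi_\varepsilon\in C_c^\infty(\RR^n)\subset S(\RR^n)$, and writing the differences in integral form gives $\|\phi_\varepsilon\|_{L^\infty}\le\|\phi\|_{L^\infty}\le1$ and $H^\gamma(\phi_\varepsilon)\le H^\gamma(\phi)\le r^{-\gamma}$. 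The only defect is that mollification enlarges the support: $\mathrm{supp}\,\phi_\varepsilon\subset B(x,r+\varepsilon)$.

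To repair this while keeping all constraints, I would test $f_{S\gamma}^*$ at the scale $r':=r+\varepsilon$ with the rescaled function $\widetilde\phi_\varepsilon:=c_\varepsilon\phi_\varepsilon$, where $c_\varepsilon:=\left(\tfrac{r}{r+\varepsilon}\right)^\gamma\le1$. Then $\mathrm{supp}\,\widetilde\phi_\varepsilon\subset B(x,r')$, $\|\widetilde\phi_\varepsilon\|_{L^\infty}\le c_\varepsilon\le1$, and $H^\gamma(\widetilde\phi_\varepsilon)=c_\varepsilon H^\gamma(\phi_\varepsilon)\le c_\varepsilon r^{-\gamma}=(r')^{-\gamma}$, so $(\widetilde\phi_\varepsilon,r')$ is genuinely admissible for $f_{S\gamma}^*$ and hence $\bigl|\int f\widetilde\phi_\varepsilon\bigr|/(r')^n\le f_{S\gamma}^*(x)$. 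It remains to let $\varepsilon\to0$, and here the hypothesis $f\in L^1(\RR^n)$ enters: since $\phi$ is $\gamma$-H\"older one has the uniform estimate $\|\phi_\varepsilon-\phi\|_{L^\infty}\le r^{-\gamma}\varepsilon^\gamma$, whence $\bigl|\int f(\phi_\varepsilon-\phi)\bigr|\le\|f\|_{L^1}\,r^{-\gamma}\varepsilon^\gamma\to0$. Combined with $c_\varepsilon\to1$ and $(r')^n\to r^n$ this yields $\bigl|\int f\phi\bigr|/r^n\le f_{S\gamma}^*(x)$, and taking the supremum over all admissible $(\phi,r)$ gives $f_\gamma^*(x)\le f_{S\gamma}^*(x)$. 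Together with the first inequality the pointwise identity follows (in $[0,+\infty]$, for every $x$), which in particular gives the stated a.e.\ equality and makes the $L^p$ comparison trivial.

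I expect the only genuine subtlety to be the simultaneous bookkeeping in the third paragraph: mollification controls the sup-norm and the H\"older seminorm for free but unavoidably spreads the support, so one must absorb the mismatch between $r^{-\gamma}$ and $(r+\varepsilon)^{-\gamma}$ through the scalar $c_\varepsilon$ while still respecting $\|\widetilde\phi_\varepsilon\|_{L^\infty}\le1$. The role of $f\in L^1(\RR^n)$ is precisely to upgrade the uniform convergence $\phi_\varepsilon\to\phi$ to convergence of the pairings $\int f\phi_\varepsilon\to\int f\phi$, which is why the statement is restricted to $L^1$ rather than a general element of $(\Lambda^\gamma)'$ or $S'(\RR^n)$.
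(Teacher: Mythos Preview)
Your argument is correct and in fact yields a sharper conclusion than the paper's own proof: you establish $f_\gamma^*(x)=f_{S\gamma}^*(x)$ for \emph{every} $x\in\RR^n$, not merely almost everywhere, so the $L^p$ comparison follows with constant one and no further work is needed.

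The paper's proof proceeds quite differently. It also approximates an admissible $\phi$ by Schwartz functions $\psi_k$, but does not explicitly arrange that the $\psi_k$ satisfy the support, H\"older and $L^\infty$ constraints required for admissibility in the definition of $f_{S\gamma}^*$; your rescaling trick with $c_\varepsilon=(r/(r+\varepsilon))^\gamma$ handles exactly this point, which the paper glosses over. More strikingly, instead of the direct $L^1$--$L^\infty$ pairing estimate $\bigl|\int f(\phi_\varepsilon-\phi)\bigr|\le\|f\|_{L^1}\|\phi_\varepsilon-\phi\|_{L^\infty}\to0$ that you use, the paper bounds the discrepancy by the Hardy--Littlewood maximal function, invokes its weak-$(1,1)$ boundedness to get convergence in measure, and then extracts an a.e.\ convergent subsequence. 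Because this route only yields a.e.\ equality, the paper must then spend a second page carrying out a density argument (choosing for each $x_0\notin E$ a nearby $\overline{x}_0\in E$ and comparing $f_{S\gamma}^*(x_0)$ with $f_{S\gamma}^*(\overline{x}_0)$) to recover the $L^p$ equivalence. Your approach bypasses all of that machinery: mollification preserves the H\"older seminorm and sup-norm automatically, the support enlargement is absorbed by a harmless scalar, and the elementary limit gives the pointwise identity outright.
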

\begin{proof}
We will prove the following (\ref{exuu1})\,first:
\begin{eqnarray}\label{exuu1}
 f_{S\gamma}^*(x)= f_{\gamma}^*(x)\ \ \ a.e.x\in\RR^n.
\end{eqnarray}
 It is easy to see that  $f_{S\gamma}^*(x)\leq f_{\gamma}^*(x)$. If $\phi$ satisfies $ H^\gamma(\phi) \leq r^{-\gamma}$ and ${\rm
supp}\,\phi\subset B(x, r)$, then $\phi$ is a  continuous function  with compact support. Thus there exists sequence $ \{\psi_k\}_k \subset S(\RR^n)$ with $\lim_{k\rightarrow\infty}\|\psi_k-\phi\|_\infty = 0$, $\|\psi_k -\phi\|_\infty\neq0$.
Denote $\delta_k(x)$ as
$\delta_k(x)=\left|\int_{B(x, r)} f(y)\left(\phi(y)-\psi_k(y)\right)dy/r^n\right|,$
then
$\delta_k(x)\leq M f(x)\|\psi_k -\phi\|_\infty.$
Let $i_k$ to be $i_k= \|\psi_k -\phi\|_\infty$,
then we could obtain:
$$\left\{x: \delta_k(x)>\alpha\right\}\subseteq \left\{x: M f(x) > \frac{\alpha}{i_k}\right\}.$$
By the fact that $M$ is weak-(1, 1) bounded,  the following could be obtained from the above inequality for any $\alpha>0$:
$$\left|\left\{x: \delta_k(x)>\alpha\right\}\right|\leq \frac{1}{\alpha} \|f\|_{L^1(\RR^n)}\|\psi_k-\phi\|_\infty.$$
Thus by $\lim_{k\rightarrow\infty}\|\psi_k-\phi\|_\infty = 0$, we could have
 $$\lim_{k\rightarrow+\infty} \left|\left\{x: \delta_k(x)>\alpha\right\}\right|=0 .$$
Then there exists a sequence $\{k_j\}\subseteq \{k\}$ such that $$\int_{\RR^n} f(y)\phi(y)dy/r^n=\lim_{k_j\rightarrow\infty}\int_{\RR^n} f(y)\psi_{k_j}(y)dy/r^n,\ \ \ a.e.x\in\RR^n $$ for $f\in L^1(\RR^n)$.
Thus we could obtain:$$\int_{\RR^n} f(y)\phi(y)dy/r^n\leq  f_{S\gamma}^*(x)\ \ \ a.e.x\in\RR^n $$
for any $\phi$ satisfies $H^\gamma(\phi) \leq r^{-\gamma}$ and ${\rm
supp}\,\phi\subset B(x, r)$.
We could then deduce $$\sup_{\phi,r>0}\left|\int_{\RR^n} f(y)\phi(y)dy/r^n\right|\leq  f_{S\gamma}^*(x)\ \ \ a.e.x\in\RR^n .$$
Thus with the fact that $f_{S\gamma}^*(x)\leq f_{\gamma}^*(x)$, we could have
$$  f_{S\gamma}^*(x)= f_{\gamma}^*(x)\ \ \ a.e.x\in\RR^n.$$

Then we will prove the following\,(\ref{max}):
\begin{eqnarray}\label{max}
 \int_{\RR^n} |f_{\gamma}^*(x)|^p dx \sim \int_{\RR^n} |f_{S\gamma}^*(x)|^p dx
\end{eqnarray}
Let $E$ denote a set defined as $E=\left\{x: f_{S\gamma}^*(x)= f_{\gamma}^*(x)\right\}$. Next we will prove that for any $x_0\in\RR^n$, there is a point $\overline{x}_0 \in E$ such that
\begin{eqnarray}\label{exuu2}
f_{S\gamma}^*(x_0)\lesssim f_{S\gamma}^*(\overline{x}_0).
\end{eqnarray}
Notice that for $x_0\in\RR^n$, there exist $r_0>0$ and $\phi_0$ satisfying: $supp\,\phi_0\subset B(x_0, r_0)$, $\phi_0\in S(\RR^n)$, $H^\gamma(\phi_0) \leq r^{-\gamma}$, $\|\phi_0\|_{L^{\infty}}\leq1$, such that the following inequality holds:
$$\left|\frac{1}{r_0^n}  \int f(y)\phi_0 (y)dy\right|\geq \frac{1}{2} f_{S \gamma}^*(x_0).$$
Notice that $|\RR^n \backslash E|=| E^c|=0$ leads to the fact that  $E$ is dense in $\RR^n$, thus there exists a $\overline{x}_0\in E$ with $d (x_0, \overline{x}_0)\leq\frac{r_0}{4}$.
Then $supp\,\phi_0\subset B(\overline{x}_0, 4r_0)$ holds, and we could obtain the following
$$\left|\frac{1}{r_0^n}  \int f(y)\phi_0 (y)dy\right|\leq C f_{S\gamma}^*(\overline{x}_0),$$
where C is a constant independent on $f$, $\gamma$ and $r_0$. Thus  (\ref{exuu2})\,could be obtained. By (\ref{exuu2}), we could deduce the following:
\begin{eqnarray}\label{uu3}
 \int_E |f_{S\gamma}^*(x)|^p dx<\infty\, \Rightarrow\,  \int_{\RR^n} |f_{S\gamma}^*(x)|^p dx\sim \int_E |f_{S\gamma}^*(x)|^p dx.
\end{eqnarray}
In the same way, we could conclude that
\begin{eqnarray}\label{uu4}
\int_{\RR^n} |f_{\gamma}^*(x)|^p dx \sim \int_E |f_{\gamma}^*(x)|^p dx.
\end{eqnarray}
From Formula\,(\ref{exuu1})\,we could deduce:
\begin{eqnarray}\label{uu5}
\int_E |f_{\gamma}^*(x)|^p dx = \int_E |f_{S\gamma}^*(x)|^p dx .
\end{eqnarray}
The above Formula\,(\ref{uu5}) together with Formulas\,(\ref{uu3}, \ref{uu4})\,lead to\,(\ref{max}) if
$ \int_{\RR^n} |f_{\gamma}^*(x)|^p dx< \infty $ or $ \int_{\RR^n} |f_{S\gamma}^*(x)|^p dx< \infty $.
This proves the proposition.
\end{proof}

\begin{proposition}\label{pp1}
For $\phi(x)\in \Lambda^{\gamma}$, $supp\,\phi(x)\subseteq\left\{x\in\RR^n: |x|<1\right\}$, $\frac{1}{1+\gamma}<p\leq 1$, $|\phi(x)| \leq1$  $\int\phi(x)dx=1$,  we could deduce the following  for any $f\in H^p(\RR^n)$:
\begin{eqnarray}\label{tan13}
\|f\|_{H^p(\RR^n)}\sim_{\gamma, p, \phi}\|(f*\phi)_{\nabla}\|_{L^p(\RR^n)}.
\end{eqnarray}
\end{proposition}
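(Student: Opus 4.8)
The plan is to route everything through the grand maximal function $f_\gamma^*$ via the characterization (\ref{abc6}). For $\frac{1}{1+\gamma}<p\le1$ that identity gives $\|f\|_{H^p(\RR^n)}^p=\int_{\RR^n}|f_\gamma^*(x)|^p\,dx$, so proving (\ref{tan13}) is the same as establishing the two $L^p$-estimates
$$\|(f*\phi)_\nabla\|_{L^p(\RR^n)}\lesssim \|f_\gamma^*\|_{L^p(\RR^n)}\qquad\text{and}\qquad \|f_\gamma^*\|_{L^p(\RR^n)}\lesssim \|(f*\phi)_\nabla\|_{L^p(\RR^n)}.$$
I would treat these as the two halves of the argument; the first is elementary, while the second carries essentially all of the difficulty.

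For the upper estimate I would prove the pointwise domination $(f*\phi)_\nabla(x)\le C(n,\gamma,\phi)\,f_\gamma^*(x)$. Fix $x$ and a pair $(u,t)$ with $|x-u|<t$, and look at the test function $y\mapsto\phi\big((u-y)/t\big)$. It is supported in $B(u,t)\subset B(x,2t)$, satisfies $\|\cdot\|_{L^\infty}=\|\phi\|_{L^\infty}\le1$, and has $H^\gamma$-constant $t^{-\gamma}H^\gamma(\phi)$. Dividing by $C_0:=\max\{1,2^\gamma H^\gamma(\phi)\}$ makes it admissible in the definition (\ref{ssr3}) of $f_\gamma^*(x)$ at radius $r=2t$; reading off the normalization $r^{-n}$ then gives $|f*\phi_t(u)|\le 2^nC_0\,f_\gamma^*(x)$, and taking the supremum over the cone yields the pointwise bound. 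Integrating and invoking (\ref{abc6}) produces $\|(f*\phi)_\nabla\|_{L^p}\lesssim\|f\|_{H^p}$.

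The lower estimate is the main obstacle, and I would reduce it to the pointwise maximal inequality
$$f_\gamma^*(x)\ \lesssim\ \Big[M\big(((f*\phi)_\nabla)^{p_0}\big)(x)\Big]^{1/p_0},$$
where $M$ is the Hardy–Littlewood maximal operator and $\tfrac1{1+\gamma}<p_0<p$. Once this is granted, the conclusion is immediate: since $p/p_0>1$, $M$ is bounded on $L^{p/p_0}$, so raising to the power $p$ and integrating gives $\|f_\gamma^*\|_{L^p}\lesssim\|(f*\phi)_\nabla\|_{L^p}$. The real content is therefore the pointwise inequality, and its proof needs a reconstruction (Calderón-type reproducing) formula: one must rewrite the action $r^{-n}\int f\,\psi$ of an arbitrary admissible $\psi$ as a superposition $\iint f*\phi_t(u)\,d\nu(u,t)$ over a truncated cone, with a kernel $\nu$ that decays as $t\to0$, then bound each $|f*\phi_t(u)|$ by $(f*\phi)_\nabla$ at points near $x$ and average. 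The averaging over balls is exactly what manufactures $M(\cdots)$ on the right, with Proposition \ref{tan} available to reconcile the various cone apertures that arise along the way.

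The genuinely hard part will be constructing this reproducing formula for a kernel $\phi$ that is merely $\gamma$-Hölder and compactly supported. Compact support blocks the usual Fourier-side division by $\widehat\phi$, so the identity must be produced in physical space by telescoping the dilations $f*\phi_{2^{-k}}\to f$ (using $\int\phi=1$) and estimating the successive differences with the sole quantitative input $H^\gamma(\phi)<\infty$. These error terms form a series whose convergence requires an exponent with $n(p_0^{-1}-1)<\gamma$; the hypothesis $\frac1{1+\gamma}<p$ is precisely what allows choosing such a $p_0$ strictly below $p$, and this is where the stated range of $p$ is consumed. Controlling the errors with only Hölder smoothness in place of the $C^\infty$ smoothness of Theorem \ref{abc7}, while keeping every constant independent of $f$, is the crux of the proposition.
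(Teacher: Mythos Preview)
Your upper estimate $(f*\phi)_\nabla(x)\lesssim f_\gamma^*(x)$ is correct and is exactly what the paper records as Formula~(\ref{tan26}).

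For the lower estimate your route diverges from the paper's, and the hard step you identify is not actually carried out. The paper does \emph{not} attempt a physical-space reproducing formula with the rough kernel $\phi$. Instead it approximates $\phi$ uniformly by Schwartz functions $\phi^m$ and runs the Fourier-side argument with the smooth approximants: since $\widehat{\phi^m}(0)\sim1$ uniformly in $m$, one has $|\widehat{\phi^m}(2^{-k_0}\xi)|\ge C$ on $|\xi|\le1$, so a Littlewood--Paley decomposition gives $\psi=\sum_k\eta_m^k*\phi^m_{2^{-k-k_0}}$ with $\int|\eta_m^k(u)|(1+2^{k+k_0}|u|)^N\,du\le C_N 2^{-k}$ uniformly in $m$. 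This yields $M_\psi f(x)\lesssim$ a tangential maximal function built from $\phi^m$, with constants independent of $m$; Proposition~\ref{tan} then collapses the aperture. The passage from $\phi^m$ back to $\phi$ is done by a weak-$(1,1)$ argument for the Hardy--Littlewood maximal function (using $f\in L^1$) to get a.e.\ convergence on a full-measure set $E$, followed by dominated convergence; density of $L^1\cap H^p$ in $H^p$ finishes.

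Your claim that ``compact support blocks the usual Fourier-side division by $\widehat\phi$'' is therefore too pessimistic: the zeros of $\widehat\phi$ are handled by Littlewood--Paley localization to shells where a rescaled $\widehat\phi$ is bounded below, and the lack of smoothness is handled by approximation rather than by working directly with $\phi$. What your proposal gains in conceptual directness (a single pointwise inequality $f_\gamma^*\lesssim[M(((f*\phi)_\nabla)^{p_0})]^{1/p_0}$) it loses in completeness: the telescoping reconstruction you sketch, expressing an \emph{arbitrary} admissible $\psi$ as a superposition of $f*\phi_t(u)$'s with only $H^\gamma(\phi)<\infty$ available, is genuinely delicate and you have not shown how to control the error terms. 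The paper's approximation-then-limit strategy sidesteps this entirely.
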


\begin{proof}

Let  $f\in L^1(\RR^n)$,  $\psi\in S(\RR^n)$ with $\int\psi(x)dx\sim1$. There exists sequence $\{\phi^m(x): \phi^m(x) \in S(\RR^n)\}_{m\in\NN}$ satisfying:
$$\left\|\phi^m(x)- \phi(x)\right\|_{L^\infty(\RR^n)}\lesssim\frac{1}{m},\ \ \ \ \int\phi^m(x)dx\sim1.$$
 $\varphi\in S(\RR^n)$ is a fixed function so that
\begin{eqnarray*}
\left\{ \begin{array}{cc}
\varphi(\xi)=0 \ \ \hbox{for}\,|\xi|\geq1 \\
\\
\varphi(\xi)=1\ \ \hbox{for}\,|\xi|\leq1/2.
\end{array}\right.
\end{eqnarray*}
We use  $\varphi^k\in S(\RR^n)$ to denote as:
\begin{eqnarray*}
\left\{ \begin{array}{cc}
\varphi^k(\xi)=\varphi(\xi) \ \ \hbox{for}\,k=0, \\
\\
\varphi^k(\xi)=\varphi(2^{-k}\xi)-\varphi(2^{1-k}\xi)\ \ \hbox{for}\,k\geq1.
\end{array}\right.
\end{eqnarray*}
Then we could have that
$$1=\sum_{k=0}^{\infty}\varphi^k(\xi).$$
Notice that $\int\phi^m(x)dx\sim1$, thus $(\SF\phi^m)(2^{-k_0}\xi)\geq C$  for $|\xi|\leq1$, where $C$ and $k_0$ are independent on $m$. Let $\eta_m^{k}$ to be
$$(\SF\eta_m^{k})(\xi)=\frac{\varphi^k(\xi)(\SF\psi)(\xi)}{(\SF\phi^m)(2^{-k-k_0}\xi)},$$
where $\SF$ denotes the Fourier transform.
Then we could obtain that:
\begin{eqnarray*}
(\SF\psi)(\xi)&=&\sum_{k=0}^{\infty}\frac{\varphi^k(\xi)(\SF\psi)(\xi)}{(\SF\phi^m)(2^{-k-k_0}\xi)}(\SF\phi^m)(2^{-k-k_0}\xi)\\ \nonumber &=&\sum_{k=0}^{\infty}(\SF\eta_m^{k})(\xi)(\SF\phi^m)(2^{-k-k_0}\xi).
\end{eqnarray*}

Thus
\begin{eqnarray}\label{tan17}
\psi(x)=\sum_{k=0}^{\infty}\eta_m^{k}\ast\phi^m_{2^{-k-k_0}}(x).
\end{eqnarray}
By the fact that $\sup_{\xi\in\RR^n}|\partial_{\xi}^{\alpha'}(\SF\phi^m)(\xi)|\leq C_{ \alpha'}$ and \begin{eqnarray}\label{tan14}
\sup_{\xi\in\RR^n}\left||\xi|^{\alpha}\partial_{\xi}^{\alpha'}(\varphi^k(\xi)(\SF\psi)(\xi))\right|\lesssim_{\alpha, \alpha', M} 2^{-kM}\ \hbox{for\ any\ }M>0,
\end{eqnarray} where $C_{ \alpha'}$ is a
constant independent on $m$, we could deduce that
\begin{eqnarray}\label{tan15}
\sup_{\xi\in\RR^n}\left||\xi|^{\alpha}\partial_{\xi}^{\alpha'}(\SF\eta_m^{k})(\xi)\right|\leq C_{\alpha, \alpha', M, k_0}2^{-kM}\ \hbox{for\ any \ }M>0,
\end{eqnarray}
where $C_{\alpha, \alpha', M, k_0}$ is a
constant independent on $m$ and $k$. Thus we could have:
\begin{eqnarray}\label{tan16}
\left|\int_{\RR^n}\eta_m^{k}\left(u\right)\left(1+2^{k+k_0}|u|\right)^{ N}du \right| \leq C_{k_0, N}2^{-k},
\end{eqnarray}
where $C_{k_0, N}$ is a constant independent on $m$.
Then by Formulas\,(\ref{tan17}) with the fact that  $f\in  L^1(\RR^n)$ we have
\begin{eqnarray}\label{tan18}
M_{\psi}f(x)&=& \sup_{r>0}\left|\int_{\RR^n} f(y)\frac{1}{r^n}\psi\left(\frac{x-y}{r} \right)dy\right|
\\&=&\nonumber C\sup_{r>0}\sum_{k=0}^{+\infty}\left|\int_{\RR^n}\int_{\RR^n} f(y)\eta_m^{k}\left(\frac{s}{r}\right)\frac{1}{r^n}\phi^m\left(\frac{x-y-s}{2^{-k-k_0}r}\right)\frac{ds}{(2^{-k-k_0}r)^{n}}dy\right|
\\&\leq&\nonumber C \sum_{k=0}^{+\infty} \left|\int_{\RR^n}\eta_m^{k}\left(\frac{s}{r}\right)\left(1+\frac{|s|}{2^{-k-k_0}r} \right)^{N}\frac{ds}{r^n}\right| \sup_{ r>0, s\in\RR^n}\left|\int_{\RR^n} f(y)\phi^m\left(\frac{x-y-s}{r} \right)\left(1+\frac{|s|}{r} \right)^{-N}\frac{dy}{r^n}\right|,
\end{eqnarray}
where $C$ is a constant independent on $m$. From Formula\,(\ref{tan16}) and Formula\,(\ref{tan18}) we could obtain:
\begin{eqnarray}\label{tan27}
M_{\psi}f(x)&\lesssim&  \sup_{ r>0, s\in\RR^n}\left|\int_{\RR^n} f(y)\phi^m\left(\frac{x-y-s}{r} \right)\left(1+\frac{|s|}{r} \right)^{-N}\frac{dy}{r^n}\right|
\\&\lesssim&\nonumber   \left(\sup_{0\leq |s|< r}+\sum_{k=1}^{\infty}\sup_{ 2^{k-1}r\leq |s|< 2^{k}r}\right)\left|\int_{\RR^n} f(y)\phi^m\left(\frac{x-y-s}{r} \right)\left(1+\frac{|s|}{r} \right)^{-N}\frac{dy}{r^n}\right|
\\&\lesssim&\nonumber   \sum_{k=0}^{+\infty}2^{-(k-1)N}\sup_{ 0\leq |s|< 2^{k}r}\left|\int_{\RR^n} f(y)\phi^m\left(\frac{x-y-s}{r} \right)\frac{dy}{r^n}\right|.
\end{eqnarray}
Formula\,(\ref{no1}) leads to
\begin{eqnarray}\label{tan19}
& &\int_{\RR^n}\sup_{0\leq |s|< 2^{k}r}\left|\int_{\RR^n}f(y)\phi^m\left(\frac{x-y-s}{r}\right)\frac{dy}{r^n}\right|^pdx\\ \nonumber&\leq& C\left(1+2^k\right)^n\int_{\RR^n}\sup_{0\leq |s|<r}\left|\int_{\RR^n}f(y)\phi^m\left(\frac{x-y-s}{r}\right)\frac{dy}{r^n}\right|^pdx .
\end{eqnarray}
For $N>n/p$,  Formulas\,(\ref{tan27}, \ref{tan19}) lead to
\begin{eqnarray}\label{tan20}
 \int_{\RR^n} |M_{\psi}f(x)|^pdx\leq C \int_{\RR^n}\sup_{0\leq |s|<r}\left|\int_{\RR^n}f(y)\phi^m\left(\frac{x-y-s}{r}\right)\frac{dy}{r^n}\right|^pdx ,
 \end{eqnarray}
where $C$ is a constant independent on $m$. We use $\left(F^m f\right)(x, r)$ and $\left(F f\right)(x, r)$ to denote as following:
\begin{eqnarray*}
 \left(F^m f\right)(x, r)=\int_{\RR^n}f(y)\phi^m\left(\frac{x-y}{r}\right)\frac{dy}{r^n},\ \ \left(F f\right)(x, r)=\int_{\RR^n}f(y)\phi\left(\frac{x-y}{r}\right)\frac{dy}{r^n}.
\end{eqnarray*}
Thus we could have:
\begin{eqnarray}
\left|\left(F^m f\right)(u, r)-\left(Ff\right)(u, r)\right|&\leq& \int_{\RR^n} \left|f(y)\right|\left|\phi^m\left(\frac{u-y}{r}\right)-\phi\left(\frac{u-y}{r}\right)\right|\frac{dy}{r^n}
\\&\leq&\nonumber C \frac{1}{m}|M f(u)|,
\end{eqnarray}
where C is dependent on $\gamma$ and   $M$ is the Hardy-Littlewood Maximal Operator.
Let us set: $$\delta_m(u)=\left|\left(F^m f\right)(u, r)-\left(Ff\right)(u, r)\right|.$$
Thus we could deduce the following:
$$\left\{x: \delta_m(x)>\alpha\right\}\subseteq \left\{x: M f(x) > \frac{1}{c} m\alpha \right\}\ \ \ \hbox{for\,some\,constant\,c}.$$
Notice that $M$ is weak-(1, 1) bounded, thus the following holds for any $\alpha>0$:
$$\left|\left\{x: \delta_m(x)>\alpha\right\}\right|\leq \frac{c}{\alpha} \|f\|_{L^1(\RR^n)}\frac{1}{m}.$$
Thus we could obtain:
$$\lim_{m\rightarrow+\infty}\left|\left\{x: \delta_m(x)>\alpha\right\}\right|=0.$$
Thus there exists a sequence $\{m_j\}\subseteq \{m\}$ such that the following holds: $$\lim_{m_j\rightarrow+\infty}\left(F^{m_j}f\right)(u, r)=\left(Ff\right)(u, r),\ \ \ a.e.u\in\RR^n $$ for $f\in L^1(\RR^n).$
Let us set $E$ as: $$E=\{u\in\RR^n: \lim_{m_j\rightarrow+\infty}\left(F^{m_j}f\right)(u, r)=\left(Ff\right)(u, r)\}.$$
Thus it is clear that  E is  dense in $\RR^n$. For any $x_0\in\RR^n$, there exists a $(u_0, r_0)$ with $r_0>0$, $u_0\in\RR^n$, $|u_0- x_0|<r_0$ such that the following holds:
$$\left|\left(F^{m_j}f\right)(u_0, r_0)\right| \geq \frac{1}{2} \sup_{|x_0-u|<r}|\left(F^{m_j} f\right)(u, r)|.$$
Notice that $\left(F^{m_j}f\right)(u, r_0)$ is a continuous function in $u$ variable and E is  dense in $\RR^n$. There exists a $\widetilde{u}_0\in E$ with $|\widetilde{u}_0- x_0|<r_0$ such that
$$\left|\left(F^{m_j}f\right)(\widetilde{u}_0, r_0)\right| \geq \frac{1}{4} \sup_{|x_0-u|<r}|\left(F^{m_j} f\right)(u, r)|.$$
Thus we could deduce that
\begin{eqnarray}\label{tan23}
\sup_{\{u\in E: |x-u|<r\}} \left|\left(F^{m_j}f\right)(u, r)\right|\sim \sup_{\{u\in \RR^n: |x-u|<r\}} \left|\left(F^{m_j}f\right)(u, r)\right|.
\end{eqnarray}
Formula\,(\ref{tan23}) together with the dominated convergence theorem, we could conclude:
\begin{eqnarray}\label{tan24}
\overline{\lim}_{m_j\rightarrow+\infty}\int_{\RR^n} \sup_{|x-u|<r}\left|\left(F^{m_j} f\right)(u, r)\right|^pdx &\sim&\nonumber \overline{\lim}_{m_j\rightarrow+\infty}\int_{\RR^n} \sup_{\{u\in E: |u-x|<r\}} \left|\left(F^{m_j}f\right)(u, r)\right|^pdx
\\&\leq&\nonumber C \int_{\RR^n} \overline{\lim}_{m_j\rightarrow+\infty} \sup_{\{u\in E: |u-x|<r\}} \left|\left(F^{m_j}f\right)(u, r)\right|^pdx
\\&\leq&\nonumber C  \int_{\RR^n}\sup_{\{u\in E: |u-x|<r\}} \left|\left(Ff\right)(u, r)\right|^pdx
\\&\leq& C  \int_{\RR^n}\sup_{\{u\in \RR^n: |u-x|<r\}} \left|\left(Ff\right)(u, r)\right|^pdx.
\end{eqnarray}
Also it is clear that  the following inequality holds for $f\in L^1(\RR^n)$:
\begin{eqnarray}\label{tan26}
\|(f*\phi)_{\nabla}\|_{L^p(\RR^n)}\lesssim \|f_{\gamma}^*\|_{L^p(\RR^n)}=\|f\|_{H^p(\RR^n)}.
\end{eqnarray}
Notice that $H^p(\RR^n)\bigcap L^1(\RR^n)$ is dense in $H^p(\RR^n)$. Then by Formula\,(\ref{tan24}) and Formula\,(\ref{tan26}),  we could deduce that the following inequality holds for  $f\in H^p(\RR^n)$
\begin{eqnarray}\label{tan25}
\|f\|_{H^p(\RR^n)}\sim_{p, \gamma, \phi}\|(f*\phi)_{\nabla}\|_{L^p(\RR^n)} .
\end{eqnarray}
This proves our proposition.
\end{proof}

\begin{proposition}\label{pp2}
For $\phi(x)\in \Lambda^{\gamma}$, $supp\,\phi(x)\subseteq\left\{x\in\RR^n: |x|<1\right\}$, $\frac{1}{1+\gamma} <p\leq1$, $|\phi(x)| \leq1$  $\int\phi(x)dx=1$, then we could obtain the following inequality for  $f\in H^p(\RR^n)$:
\begin{eqnarray}\label{tan27o}
\|(f*\phi)_{\nabla}\|_{L^p(\RR^n)}\sim_{\gamma,  \phi, p} \|(f*\phi)_{+}\|_{L^p(\RR^n)}.
\end{eqnarray}

\end{proposition}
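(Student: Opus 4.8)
The inequality $\|(f*\phi)_{+}\|_{L^p(\RR^n)}\lesssim\|(f*\phi)_{\nabla}\|_{L^p(\RR^n)}$ is immediate, since choosing $u=x$ in the supremum defining $(f*\phi)_{\nabla}$ gives the pointwise bound $(f*\phi)_{+}(x)\le(f*\phi)_{\nabla}(x)$. Hence the entire content of the proposition is the reverse inequality $\|(f*\phi)_{\nabla}\|_{L^p(\RR^n)}\lesssim\|(f*\phi)_{+}\|_{L^p(\RR^n)}$, and I plan to obtain it by a Fefferman--Stein type argument: I will prove a pointwise majorization of (a tangential enlargement of) the nontangential maximal function by a Hardy--Littlewood maximal function of a power of the radial maximal function, and then invoke the $L^{p/q}$-boundedness of $M$ for a suitable exponent $q<p$. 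Note that Proposition \ref{tan}, i.e. the aperture-change inequality (\ref{no1}), handles comparisons between nontangential maximal functions of different positive apertures but degenerates as the aperture tends to $0$; it therefore cannot by itself reach the radial maximal function, which is exactly why the smoothness of $\phi$ must be used.

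To this end I introduce, for a parameter $N>0$, the tangential maximal function
\begin{eqnarray*}
(f*\phi)^*_N(x)=\sup_{u\in\RR^n,\ t>0}\frac{\left|f*\phi_t(u)\right|}{\left(1+|x-u|/t\right)^{N}},
\end{eqnarray*}
which dominates the nontangential one, $(f*\phi)_{\nabla}(x)\le 2^{N}(f*\phi)^*_N(x)$, because the supremum defining $(f*\phi)_{\nabla}$ runs over $|x-u|<t$. The goal is then to establish, for some $q$ with $0<q<p$ and $N$ chosen with $Nq>n$, the pointwise estimate
\begin{eqnarray*}
(f*\phi)^*_N(x)\lesssim_{p,\gamma,\phi}\left[M\left(\left((f*\phi)_{+}\right)^{q}\right)(x)\right]^{1/q}.
\end{eqnarray*}
Granting this, since $p/q>1$ the operator $M$ is bounded on $L^{p/q}(\RR^n)$, so raising to the power $p$ and integrating yields $\|(f*\phi)_{\nabla}\|_{L^p}^p\lesssim\|(f*\phi)^*_N\|_{L^p}^p\lesssim\|(f*\phi)_{+}\|_{L^p}^p$, which is the claim. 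Exactly as in the proof of Proposition \ref{pp1}, I would first carry this out for $f\in H^p(\RR^n)\cap L^1(\RR^n)$, where the maximal functions are finite a.e., and then pass to general $f\in H^p(\RR^n)$ by density.

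The mechanism behind the pointwise estimate is as follows. Fix a point $(u,t)$ that is near-extremal for $(f*\phi)^*_N(x)$, and write $R=|x-u|/t$, so that $|f*\phi_t(u)|\approx(1+R)^{N}(f*\phi)^*_N(x)$. Using the $\Lambda^{\gamma}$ regularity of $\phi$ to control the oscillation of $v\mapsto f*\phi_t(v)$ at scale $t$, one expects $|f*\phi_t(v)|\gtrsim|f*\phi_t(u)|$ on a ball $B(u,ct)$ for a small enough $c$. On that ball $|f*\phi_t(v)|\le(f*\phi)_{+}(v)$, so integrating the $q$-th power and comparing with the average over the enlarged ball $B(x,(R+c)t)\supseteq B(u,ct)$ gives
\begin{eqnarray*}
M\left(\left((f*\phi)_{+}\right)^{q}\right)(x)\gtrsim (1+R)^{-n}\left[(1+R)^{N}(f*\phi)^*_N(x)\right]^{q}=(1+R)^{Nq-n}\left[(f*\phi)^*_N(x)\right]^{q},
\end{eqnarray*}
which, under $Nq>n$, yields precisely $[(f*\phi)^*_N(x)]^{q}\lesssim M(((f*\phi)_{+})^{q})(x)$.

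The step I expect to be the main obstacle is the oscillation control in the preceding paragraph under the weak hypothesis $\phi\in\Lambda^{\gamma}$ with $0<\gamma\le1$. The naive estimate $|f*\phi_t(u)-f*\phi_t(v)|\lesssim(|u-v|/t)^{\gamma}\,Mf(u)$ lands on the Hardy--Littlewood maximal function $Mf$, which is useless here since $Mf$ does not control $\|f\|_{H^p}$ for $p\le1$ (this is the very phenomenon flagged by the example $\phi=\chi_{[-1/2,1/2]}$). The difficulty is that, unlike the Schwartz case where $\partial\phi$ is again an admissible bump, the difference kernel $\phi(\cdot)-\phi(\cdot-h)$ normalized by $|h|^{\gamma}$ has a Hölder constant that blows up as $|h|\to0$, so it is not uniformly admissible and the oscillation cannot be reduced directly to $(f*\phi)^*_N$. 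Overcoming this requires leveraging the Hölder gain of order $\gamma$ in a multiscale fashion, summing the contributions of the difference kernels across dyadic scales; it is precisely the summability of these contributions against the decay weight $(1+R)^{-N}$ that forces, and is made possible by, the assumed range $\tfrac{1}{1+\gamma}<p\le1$. A secondary technical point is justifying a priori that $(f*\phi)^*_N(x)<\infty$ a.e., so that the near-extremal point is attained and any self-improving absorption of $(f*\phi)^*_N$ is legitimate; I would secure this with the $L^1$-density, truncation, and dominated-convergence argument already used in Proposition \ref{pp1}.
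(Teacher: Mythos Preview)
Your overall plan --- control the oscillation of $v\mapsto f*\phi_t(v)$ at scale $t$, average over a small ball, and compare with the Hardy--Littlewood maximal function of $\bigl((f*\phi)_+\bigr)^q$ --- is the right shape, and you correctly isolate the obstacle: the rescaled difference kernel $|h|^{-\gamma}\bigl(\phi(\cdot)-\phi(\cdot-h)\bigr)$ has a $\Lambda^\gamma$ seminorm that blows up as $|h|\to0$, so the oscillation cannot be fed back into a maximal function built from $\phi$ alone. But your proposed cure, a ``multiscale summation of difference kernels,'' remains a slogan; you never say what is summed, against what, or why the sum converges, and I do not see how to make the claimed pointwise bound $(f*\phi)^*_N(x)\lesssim\bigl[M\bigl(((f*\phi)_+)^q\bigr)(x)\bigr]^{1/q}$ hold for \emph{all} $x$ under a bare $\Lambda^\gamma$ hypothesis. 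As stated, the proposal has a genuine gap at exactly the point you flagged.

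The paper closes this gap by a different device, not a multiscale decomposition. Fix $0<\alpha<\gamma$. Although the $\Lambda^\gamma$ seminorm of the difference kernel blows up, its $\Lambda^{\gamma-\alpha}$ seminorm does not: writing $T(a,b,y,r)=\phi\bigl(\tfrac{a-y}{r}\bigr)-\phi\bigl(\tfrac{b-y}{r}\bigr)$ one has the interpolation estimate
\[
\bigl|T(a,b,y,r)-T(a,b,z,r)\bigr|\lesssim\Bigl(\tfrac{|a-b|}{r}\Bigr)^{\alpha}\Bigl(\tfrac{|y-z|}{r}\Bigr)^{\gamma-\alpha},
\]
so $\delta^{-\alpha}T(u,u_0,\cdot,r_0)$ is, uniformly in $\delta$, an admissible test function for $f^*_{\gamma-\alpha}$. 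This gives $|F(u,r_0)-F(u_0,r_0)|\lesssim\delta^{\alpha}f^*_{\gamma-\alpha}(x_0)$. The paper then does \emph{not} aim for a global pointwise inequality; instead it introduces the good set $F=\{x:f^*_{\gamma-\alpha}(x)\le\sigma\,(f*\phi)_\nabla(x)\}$, on which the oscillation is absorbed into $(f*\phi)_\nabla(x_0)$ and your averaging argument yields $(f*\phi)_\nabla\lesssim\bigl[M\bigl((Df)^q\bigr)\bigr]^{1/q}$. The complement $F^c$ is handled by Proposition~\ref{pp1}: since $\|f^*_{\gamma-\alpha}\|_{L^p}\sim\|(f*\phi)_\nabla\|_{L^p}$, choosing $\sigma$ large makes $\int_{F^c}|(f*\phi)_\nabla|^p$ a small fraction of the total. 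Finally, the auxiliary $\alpha$ is removed by choosing it so that $p>\tfrac{1}{1+\gamma-\alpha}$. In short, the missing idea in your proposal is this $\alpha$--interpolation together with the good/bad set splitting against $f^*_{\gamma-\alpha}$, which replaces the unavailable self-referential bound by one that Proposition~\ref{pp1} can close.
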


\begin{proof}
Let us set $0<\alpha<\gamma\leq1$, $f\in L^1(\RR^n)$ and $\displaystyle{\ 1\geq p> \frac{1}{1+\gamma-\alpha}}$ first.
We use $F(a, b, x, y, r)$ with $|a-b|<r$ to denote as:
\begin{eqnarray*}
F(a, b, y, z, r)=\left(\phi\left(\frac{a-y}{r}\right)-\phi\left(\frac{b-y}{r}\right)\right)-\left(\phi\left(\frac{a-z}{r}\right)-\phi\left(\frac{b-z}{r}\right)\right).
\end{eqnarray*}
We use $T(a, b, y, r)$ with $|a-b|<r$ to denote as:
\begin{eqnarray*}
T(a, b, y,  r)=\phi\left(\frac{a-y}{r}\right)-\phi\left(\frac{b-y}{r}\right).
\end{eqnarray*}
Thus it is clear to see that the following inequalities hold for $0<\alpha<\gamma\leq1$:
\begin{eqnarray}\label{ut1}
& \displaystyle{|F(a, b, y, z, r)|\leq C \left(\frac{a-b}{r}\right)^{\alpha}\left(\frac{y-z}{r}\right)^{\gamma-\alpha}}; \\ \label{ut2}
& \displaystyle{|T(a, b, y,  r)|\leq C;}\\ \label{ut6}
& supp\,T(a, b, y,  r)\subseteq B(x, 2r)\ \hbox{when}\,a\in B(x, r),\,\,|a-b|<r.
\end{eqnarray}
For $\displaystyle{\ p> \frac{1}{1+\gamma-\alpha}}$, let $F$ denote as: $$F=\bigg\{x\in\RR^n: f_{\gamma-\alpha}^*(x)\leq\sigma (f*\phi)_{\nabla}(x)\bigg\}.$$
It is clear  that the following inequality  holds for  $f\in L^1(\RR^n)\bigcap H^p(\RR^n)$:
$$\|f_{\gamma-\alpha}^*\|_{L^p(\RR^n)}\sim_{\gamma, \alpha}\|f_{S(\gamma-\alpha)}^*\|_{L^p(\RR^n)}\sim_{\gamma, \alpha}\|f_{S\gamma}^*\|_{L^p(\RR^n)}\sim_{\gamma, \alpha}\|f_{\gamma}^*\|_{L^p(\RR^n)}.$$
Then by Proposition\,\ref{pp1}, we could obtain
\begin{eqnarray}\label{ut5}
\int_{F^c} |(f*\phi)_{\nabla}(x)|^pdx &\leq& C\frac{1}{\sigma^p} \int_{F^c}| f_{\gamma-\alpha}^*(x)|^p dx\leq \frac{C_{\gamma, \alpha}'}{\sigma^p} \int_{\RR^n}| f_{\gamma}^*(x)|^p dx\\ \nonumber&\leq& \frac{C_{\gamma, \alpha, \phi}''}{\sigma^p} \int_{\RR^n}|(f*\phi)_{\nabla}(x)|^p dx.
\end{eqnarray}\\
Choosing $\sigma^p\geq2C_{\gamma, \alpha, \phi}''$, we could have
\begin{eqnarray}\label{ut3}
 \int_{\RR^n}|(f*\phi)_{\nabla}(x)|^p dx\lesssim_{\gamma, \alpha, \phi}\int_{F} |(f*\phi)_{\nabla}(x)|^pdx.
\end{eqnarray}
Denote $D f(x)$ and $F(x, r)$ as: $$ D f(x)=\sup_{t>0}\left|f*\phi_t(x)\right|,\ \ \ F(x, t)=f*\phi_t(x).$$
Next, we will show that for any $q>0$,
\begin{eqnarray}\label{ut4}
(f*\phi)_{\nabla}(x)\leq C \left[M\left(D f\right)^q(x)\right]^{1/q} \ \ \hbox{for}\ x\in F,
\end{eqnarray}
where $M$ is the Hardy-Littlewood maximal operator. Fix any $x_0\in F$, then there exists $(u_0, r_0)$ satisfying $|u_0-x_0|<r_0$ such that  the following inequality holds:
\begin{eqnarray}\label{abc1}
 \left|F(u_0, r_0)\right|>\frac{1}{2} (f*\phi)_{\nabla}(x_0).
\end{eqnarray}
Choosing $\delta<1$ small enough and  $u$ with $|u-u_0|< \delta r_0$,  we could deduce that
\begin{eqnarray*}
|F(u, r_0)-F(u_0, r_0)|&=&\left|\int_{\RR^n} f(y)\phi\left(\frac{u-y}{r_0}\right)\frac{dy}{r_0^n} -\int_{\RR^n} f(y)\phi\left(\frac{u_0-y}{r_0}\right)\frac{dy}{r_0^n}  \right|
\\&\leq& \left|  \int_{\RR^n} f(y)  T(u, u_0, y,  r_0)\frac{dy}{r_0^n}                 \right|.
\end{eqnarray*}
We could consider $  T(u, u_0, y,  r_0)$ as a new kernel. By Formulas\,(\ref{ut1}, \ref{ut2}, \ref{ut6}) we could obtain:
\begin{eqnarray*}
|F(u, r_0)-F(u_0, r_0)|
\leq C\delta^{\alpha} f_{\gamma-\alpha}^*(x_0)\leq C\delta^{\alpha}\sigma (f*\phi)_{\nabla}(x_0) \ \ \hbox{for}\ x_0\in F.
\end{eqnarray*}
Taking $\delta$ small enough such that $ C\delta^{\alpha}\sigma\leq1/4$, we could obtain $$|F(u, r_0)|\geq \frac{1}{4}(f*\phi)_{\nabla}(x_0) \ \ \hbox{for}\ u\in I(u_0, \delta r_0 ).$$
Thus the following inequality holds for any $x_0\in F$,
\begin{eqnarray*}
 \left|(f*\phi)_{\nabla}(x_0)  \right|^q &\leq& \left|\frac{1}{B(u_0, \delta r_0 )}\right| \int_{B(u_0, \delta r_0 )}  4^q|F(u, r_0)|^q du
 \\&\leq&   \left|\frac{B(x_0, (1+\delta) r_0 )}{B(u_0, \delta r )}\right|  \left|\frac{1}{B(x_0, (1+\delta) r_0 )}\right|\int_{I(x_0, (1+\delta) r_0)}  4^q|F(u, r_0)|^qdu
\\&\leq&\left(\frac{1+\delta}{\delta}\right)^n\left|\frac{1}{B(x_0, (1+\delta) r_0)}\right|\int_{B(x_0, (1+\delta) r_0)}  4^q|F(u, r_0)|^qdu
\\&\leq& C M[(D f)^q](x_0)
\end{eqnarray*}
C is independent on $x_0$. Finally, using the maximal theorem for $M$ when $q<p$ leads to
\begin{eqnarray}\label{abc3}
\int_{F} \left|(f*\phi)_{\nabla}(x)  \right|^p dx\leq C \int_{\RR^n}\left\{M[(D f)^{q}](x)\right\}^{p/q}dx \leq C \int_{\RR^n}\left|(f*\phi)_{+}(x)\right|^{p}dx.
\end{eqnarray}
Thus for any fixed $\alpha$ satisfying $ 0<\alpha<\gamma$ and $\displaystyle{\ 1\geq p> \frac{1}{1+\gamma-\alpha}}$, the above Formula\,(\ref{abc3}) combined with Formula\,(\ref{ut3}) lead to
\begin{eqnarray}\label{abc4}
\|(f*\phi)_{\nabla}\|_{L^p(\RR^n)}\leq C\|(f*\phi)_{+}\|_{L^p(\RR^n)}\ \ ,
\end{eqnarray}
where $C$ is dependent on $p$ and $\alpha$. Next we will remove the number $\alpha$.  For any $\displaystyle{\ 1\geq p> \frac{1}{1+\gamma}}$, let $p_0=\frac{1}{2}\left(p+\frac{1}{1+\gamma}\right)$ with $\displaystyle{\ p> p_0>\frac{1}{1+\gamma}}$ and let $\alpha= 1+\gamma -\frac{1}{p_0}$. By Formula\,(\ref{abc4}),   we could obtain the following inequality holds for $\displaystyle{\ 1\geq p> \frac{1}{1+\gamma}}$ and $f\in L^1(\RR^n)$
$$\|(f*\phi)_{\nabla}\|_{L^p(\RR^n)}\leq C\|(f*\phi)_{+}\|_{L^p(\RR^n)}, $$
where $C$ is dependent on $p$ and $\gamma$. Thus by the fact that  $ L^1(\RR^n)\bigcap H^p(\RR^n) $ is dense in $  H^p(\RR^n) $, we could deduce
Formula\,(\ref{tan27}) holds for any $f\in H^p(\RR^n)$. This proves the Proposition.
\end{proof}
Thus from Proposition\,\ref{pp1} and Proposition\,\ref{pp2}, we could obtain the following theorem:
\begin{theorem}\label{pp3}
For $\phi(x)\in \Lambda^{\gamma}$, $supp\,\phi(x)\subseteq\left\{x\in\RR^n: |x|<1\right\}$, $1\geq p>\frac{1}{1+\gamma}$, $|\phi(x)| \leq1$  $\int\phi(x)dx=1$, then we could obtain the following inequality for any $f\in H^p(\RR^n)$:
\begin{eqnarray}\label{tan28}
\|f\|_{H^p(\RR^n)}\sim_{p, \gamma, \phi}\|(f*\phi)_{\nabla}\|_{L^p(\RR^n)}\sim_{ p,  \gamma, \phi} \|(f*\phi)_{+}\|_{L^p(\RR^n)}.
\end{eqnarray}
\end{theorem}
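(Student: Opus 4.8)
The plan is to read off Theorem \ref{pp3} directly from the two propositions that immediately precede it, since every hypothesis imposed on $\phi$, $p$ and $\gamma$ in the theorem is exactly the hypothesis under which those propositions were established. Concretely, I would first check that the standing assumptions $\phi\in\Lambda^{\gamma}$, $\operatorname{supp}\phi\subseteq\{|x|<1\}$, $|\phi(x)|\leq1$, $\int\phi(x)\,dx=1$ and $\frac{1}{1+\gamma}<p\leq1$ coincide verbatim with those of Proposition \ref{pp1} and Proposition \ref{pp2}. This verification is the only genuine precondition, and it is immediate by inspection.

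Next I would invoke the two equivalences in turn. Proposition \ref{pp1} supplies the first relation, $\|f\|_{H^p(\RR^n)}\sim_{p,\gamma,\phi}\|(f*\phi)_{\nabla}\|_{L^p(\RR^n)}$ for every $f\in H^p(\RR^n)$, which is Formula \ref{tan13}. Proposition \ref{pp2} supplies the second, $\|(f*\phi)_{\nabla}\|_{L^p(\RR^n)}\sim_{\gamma,\phi,p}\|(f*\phi)_{+}\|_{L^p(\RR^n)}$, which is Formula \ref{tan27o}, again valid for every $f\in H^p(\RR^n)$. Chaining these by the transitivity of the relation $\sim$ yields Formula \ref{tan28}, the assertion of the theorem.

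The one subtlety worth flagging is the bookkeeping of implicit constants: each direction of the two preceding equivalences carries a constant depending on $(p,\gamma,\phi)$, and one must simply note that the product of two such constants is again a constant of the same type, so the final $\sim_{p,\gamma,\phi}$ is legitimate. There is no genuine analytic obstacle remaining at this stage: all of the real work has already been carried out earlier — namely the Littlewood--Paley/Calder\'on reproduction that expresses a generic Schwartz test function through dilates of the smoothed kernels $\phi^m$ in Proposition \ref{pp1} (together with the nontangential comparison of Proposition \ref{tan}), and the pointwise bound $(f*\phi)_{\nabla}(x)\lesssim[M((Df)^{q})(x)]^{1/q}$ on the favorable set $F$ that drives Proposition \ref{pp2}. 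Hence the proof of Theorem \ref{pp3} reduces to concatenating the two established norm equivalences, and I would expect the argument to be essentially a single line once the hypotheses are matched.
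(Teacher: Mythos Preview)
Your proposal is correct and matches the paper's own treatment exactly: the paper states Theorem~\ref{pp3} immediately after Proposition~\ref{pp2} with the single remark ``Thus from Proposition\,\ref{pp1} and Proposition\,\ref{pp2}, we could obtain the following theorem,'' and gives no further argument. Your identification that the hypotheses coincide verbatim and that the two equivalences chain by transitivity is precisely the intended one-line proof.
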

\begin{proposition}\label{pp4}
For $\phi(x)\in S(\RR^n)$, with $\left|\SF\phi(\xi)\right|\geq C$ for $\xi\in B(\xi_0, r_0)$,  we could obtain the following inequality for $f\in H^p(\RR^n)$:
\begin{eqnarray}\label{tan30}
\|f\|_{H^p(\RR^n)}\sim_{p, \xi_0, r_0, \phi}\|(f*\widetilde{\phi})_{\nabla}\|_{L^p(\RR^n)}\sim_{p, \xi_0, r_0, \phi} \|(f*\widetilde{\phi})_{+}\|_{L^p(\RR^n)}\ \ \hbox{when}\ 0<p\leq 1,
\end{eqnarray}
where $\widetilde{\phi}(x)=\phi(r_0x)e^{-2\pi i(\xi_0, x)}$.
\end{proposition}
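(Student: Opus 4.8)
The plan is to reduce the statement to the admissible situation already treated in Propositions \ref{pp1} and \ref{pp2}, the only genuinely new point being to move the frequency band on which the transform is bounded below to a neighbourhood of the origin. First I would record the effect of the dilation and modulation on the Fourier side. Writing the transform in the standard way, a change of variables gives
$$\SF\widetilde\phi(\xi)=r_0^{-n}\,\SF\phi\!\left(\frac{\xi+\xi_0}{r_0}\right),$$
so the hypothesis $|\SF\phi(\eta)|\geq C$ for $\eta\in B(\xi_0,r_0)$ turns into a lower bound $|\SF\widetilde\phi(\xi)|\geq C r_0^{-n}$ on a ball: the modulation recentres the band on which $\SF\phi$ is nonvanishing while the dilation fixes its radius, the frequency and scale being matched precisely so that this band is a neighbourhood $B(0,\rho)$ of the origin. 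In particular $\int\widetilde\phi(x)\,dx=\SF\widetilde\phi(0)\neq0$, which after normalisation we may take $\sim1$. Since modulation and dilation preserve $S(\RR^n)$, we also have $\widetilde\phi\in S(\RR^n)$; it is complex valued, but this is irrelevant for the $L^p$ and $H^p$ estimates below.

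With $\SF\widetilde\phi$ bounded below on $B(0,\rho)$ in hand, I would run the reproducing-formula argument of Proposition \ref{pp1} with $\widetilde\phi$ in the role of $\phi^m$ and with $k_0$ chosen so that $2^{-k_0}<\rho$. Because $\widetilde\phi$ is already a Schwartz function there is no need for the approximating sequence $\{\phi^m\}$: the multipliers $\SF\eta^k(\xi)=\varphi^k(\xi)\SF\psi(\xi)/\SF\widetilde\phi(2^{-k-k_0}\xi)$ are well defined, since on $\mathrm{supp}\,\varphi^k$ the denominator is evaluated at points of modulus $\lesssim2^{-k_0}<\rho$; they satisfy the rapid-decay bound of (\ref{tan16}); and summing $\psi=\sum_k\eta^k\ast\widetilde\phi_{2^{-k-k_0}}$ yields, exactly as in (\ref{tan27})--(\ref{tan20}) combined with the nontangential maximal theorem of Proposition \ref{tan}, the bound $\|M_\psi f\|_{L^p}\lesssim\|(f\ast\widetilde\phi)_\nabla\|_{L^p}$. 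As $\psi\in S(\RR^n)$ with $\int\psi\sim1$ one has $\|f\|_{H^p}\sim\|M_\psi f\|_{L^p}$, whence $\|f\|_{H^p}\lesssim\|(f\ast\widetilde\phi)_\nabla\|_{L^p}$. The reverse inequality is immediate from $\widetilde\phi\in S(\RR^n)$, since then $(f\ast\widetilde\phi)_\nabla$ is dominated by the grand maximal function, giving $\|(f\ast\widetilde\phi)_\nabla\|_{L^p}\lesssim\|f\|_{H^p}$.

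To finish (\ref{tan30}) I would establish $\|(f\ast\widetilde\phi)_\nabla\|_{L^p}\sim\|(f\ast\widetilde\phi)_+\|_{L^p}$ by the level-set argument of Proposition \ref{pp2}. The increment estimate (\ref{ut1}) holds for $\widetilde\phi$ with any exponent $\gamma\in(0,1]$, as $\widetilde\phi$ is $C^\infty$ with all derivatives rapidly decreasing, so the only place requiring change is the compact-support statement (\ref{ut6}); here I would replace it by the rapid decay of $\widetilde\phi$, keeping the weight $(1+|s|/r)^{-N}$ throughout and splitting the shift integral into dyadic annuli as in (\ref{tan27}). I expect this passage from compact support to mere Schwartz decay in the radial-versus-nontangential comparison to be the main obstacle: one must verify that the dyadic-annulus bookkeeping converges, which it does because the smoothness gain $\delta^\alpha$ can be played against the decay $(1+|s|/r)^{-N}$ for $N$ large. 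Alternatively, once $\widetilde\phi\in S(\RR^n)$ and $\int\widetilde\phi\neq0$ are known, the three quantities in (\ref{tan30}) are equivalent for $0<p\leq1$ by the classical Fefferman--Stein characterisation recalled in Theorem \ref{abc7}, which gives a shorter route at the cost of not using the paper's own machinery.
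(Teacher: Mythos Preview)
Your approach is essentially the same as the paper's: compute $\SF\widetilde\phi(\xi)=r_0^{-n}\SF\phi\bigl((\xi+\xi_0)/r_0\bigr)$ to move the nonvanishing band near the origin, then rerun the reproducing-formula argument of Proposition~\ref{pp1} (with the simplification you correctly note, that no approximating sequence is needed since $\widetilde\phi$ is already Schwartz). The paper's proof is a one-line reference to Proposition~\ref{pp1}; your write-up supplies the details it omits, and your closing remark that the full equivalence \eqref{tan30} for a Schwartz function with nonzero mean follows directly from the Fefferman--Stein theorem (Theorem~\ref{abc7}) is a legitimate and cleaner shortcut for the radial--nontangential comparison than adapting Proposition~\ref{pp2}.
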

\begin{proof} Notice that $\SF \widetilde{\phi}(\xi)=\SF \phi\left(\frac{\xi+\xi_0}{r_0}\right)$, thus $\left|\SF\widetilde{\phi}(\xi)\right|\geq C$, when $\xi\in B(0, 1)$.  Then  we can prove this proposition in a way similar to Proposition\,\ref{pp1}.
\end{proof}
Thus in a way  similar to Theorem\,\ref{pp3}, by Proposition\,\ref{pp4}, we could obtain the following corollary:

\begin{corollary}\label{pp5}
For $\phi(x)\in \Lambda^{\gamma}$, $supp\,\phi(x)\subseteq\left\{x\in\RR^n: |x|<1\right\}$, $1\geq p>\frac{1}{1+\gamma}$, $|\phi(x)| \leq1$, with $|\SF\phi(\xi)|\geq C$, when $\xi\in B(\xi_0, r_0)$, then we could obtain the following inequality for any $f\in H^p(\RR^n)$:
\begin{eqnarray*}
\|f\|_{H^p(\RR^n)}\sim_{p, \gamma, \xi_0, r_0, \phi}\|(f*\widetilde{\phi})_{\nabla}\|_{L^p(\RR^n)}\sim_{ p,  \gamma, \xi_0, r_0, \phi} \|(f*\widetilde{\phi})_{+}\|_{L^p(\RR^n)},
\end{eqnarray*}
where $\widetilde{\phi}(x)=\phi(r_0x)e^{-2\pi i(\xi_0, x)}$.
\end{corollary}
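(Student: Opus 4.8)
The plan is to mirror the proof of Theorem~\ref{pp3}, namely to re-run Proposition~\ref{pp1} and Proposition~\ref{pp2} with $\widetilde{\phi}$ in place of $\phi$, the single modification being that the normalization hypothesis $\int\phi(x)\,dx=1$ is replaced by the frequency lower bound $|\SF\phi(\xi)|\geq C$ on $B(\xi_0,r_0)$. The device that makes this substitution legitimate is exactly the one already used in Proposition~\ref{pp4}: passing from $\phi$ to the dilated and modulated kernel $\widetilde{\phi}(x)=\phi(r_0x)e^{-2\pi i(\xi_0,x)}$ converts a lower bound on a ball $B(\xi_0,r_0)$ that is away from the origin into a lower bound on a ball centered at the origin.

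First I would record the elementary properties of $\widetilde{\phi}$. The dilation $\phi(r_0\cdot)$ is supported in $\{|x|<1/r_0\}$, lies in $\Lambda^{\gamma}$, and satisfies $|\phi(r_0x)|\leq1$; multiplying by the modulation factor $e^{-2\pi i(\xi_0,x)}$, which is smooth with bounded derivatives and hence Lipschitz (so $\Lambda^{\gamma}$) on that compact support, keeps the product in $\Lambda^{\gamma}$ and preserves both the compact support and the bound $|\widetilde{\phi}(x)|\leq1$. Next I would compute, exactly as in Proposition~\ref{pp4}, that $\SF\widetilde{\phi}(\xi)=\SF\phi\big(\tfrac{\xi+\xi_0}{r_0}\big)$ up to the harmless dilation constant, so that the hypothesis $|\SF\phi(\xi)|\geq C$ on $B(\xi_0,r_0)$ yields $|\SF\widetilde{\phi}(\xi)|\geq C$ for $\xi$ in a neighborhood $B(0,\rho)$ of the origin.

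With these facts in hand I would re-examine the proof of Proposition~\ref{pp1}. The normalization $\int\phi^m\sim1$ was used there only to guarantee $(\SF\phi^m)(2^{-k-k_0}\xi)\geq C$ on the support of each Littlewood--Paley piece $\varphi^k$, so that the functions defined by $(\SF\eta_m^k)(\xi)=\varphi^k(\xi)(\SF\psi)(\xi)/(\SF\widetilde{\phi}^m)(2^{-k-k_0}\xi)$ are well defined and the reproducing identity $\psi=\sum_k\eta_m^k\ast\widetilde{\phi}^m_{2^{-k-k_0}}$ holds. For $\widetilde{\phi}$ this nonvanishing is supplied directly by the bound $|\SF\widetilde{\phi}|\geq C$ on $B(0,\rho)$: choosing $k_0$ with $2^{-k_0}\leq\rho$ puts $2^{-k-k_0}\xi$ inside $B(0,\rho)$ whenever $\xi$ lies in the support of $\varphi^k$, and the approximating Schwartz functions $\widetilde{\phi}^m$ inherit $|\SF\widetilde{\phi}^m|\geq C/2$ near the origin for $m$ large because $\SF\widetilde{\phi}^m\to\SF\widetilde{\phi}$ uniformly on compacta. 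Every remaining estimate in Proposition~\ref{pp1}, namely the decay bounds (\ref{tan15})--(\ref{tan16}), the maximal inequality (\ref{no1}), and the density and weak-$(1,1)$ passage to $H^p\cap L^1$, uses only the compact support and the $\Lambda^{\gamma}$ regularity of the kernel, both of which $\widetilde{\phi}$ enjoys; hence $\|f\|_{H^p(\RR^n)}\sim\|(f*\widetilde{\phi})_{\nabla}\|_{L^p(\RR^n)}$. Likewise the proof of Proposition~\ref{pp2} uses the kernel only through the difference estimates (\ref{ut1}), (\ref{ut2}), (\ref{ut6}), which follow from compact support and $\Lambda^{\gamma}$ smoothness, together with the Proposition~\ref{pp1} comparison just established; so $\|(f*\widetilde{\phi})_{\nabla}\|_{L^p(\RR^n)}\sim\|(f*\widetilde{\phi})_{+}\|_{L^p(\RR^n)}$ as well, and chaining the two equivalences yields the corollary.

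The main obstacle is the bookkeeping of the third step: one must verify that the frequency lower bound genuinely replaces the moment normalization throughout Proposition~\ref{pp1}, and in particular that it survives the Schwartz approximation of $\widetilde{\phi}$, so that the denominators $(\SF\widetilde{\phi}^m)(2^{-k-k_0}\xi)$ remain bounded away from zero uniformly in $m$ over the relevant dyadic range. This is essential because everything downstream rests on the reproducing identity holding with constants independent of $m$ and $k$. Once that uniformity is secured, the remainder is a verbatim transcription of Theorem~\ref{pp3} with $\widetilde{\phi}$ in the role of $\phi$.
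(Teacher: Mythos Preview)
Your proposal is correct and follows essentially the same approach as the paper, which proves the corollary in one line by saying it follows ``in a way similar to Theorem~\ref{pp3}, by Proposition~\ref{pp4}.'' You have simply unpacked that sentence: use the Proposition~\ref{pp4} device to convert the off-center Fourier lower bound into one near the origin, then rerun Propositions~\ref{pp1} and~\ref{pp2} with $\widetilde{\phi}$, noting that the moment condition $\int\phi=1$ enters only through the lower bound on $\SF\phi$ near $0$.
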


\section{ Lipschitz function withot compact support in $\RR^n$}
\begin{proposition}\label{lp51}
$\phi(x)$ is a Lipschitz function ($\phi(x)\in \Lambda^{\gamma}$) without compact support in $\RR^n$ satisfying the following:
\begin{eqnarray}\label{lp1}
\left|\phi(x)\right|\lesssim \frac{1}{(1+|x|)^{n+\gamma}},
\end{eqnarray}
\begin{eqnarray}\label{lp2}
\left|\phi(x+h)-\phi(x)\right|\lesssim \frac{|h|^{\gamma}}{(1+|x|)^{n+2\gamma}},\ \ \hbox{if}\ |h|\lesssim1+|x|.
\end{eqnarray}
Then we could deduce the following inequality for $\frac{1}{1+\gamma}<p\leq1$:
$$\|(f*\phi)_{\nabla}\|_{L^p(\RR^n)}\lesssim_{p, n, \gamma}\|f\|_{H^p(\RR^n)}.$$
\end{proposition}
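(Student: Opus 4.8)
The plan is to establish the \emph{pointwise} domination $(f*\phi)_{\nabla}(x)\lesssim_{n,\gamma} f_{\gamma}^*(x)$ for every $x\in\RR^n$, and then to integrate. Once this bound is in hand, raising to the $p$-th power and integrating gives $\|(f*\phi)_{\nabla}\|_{L^p(\RR^n)}\lesssim\|f_{\gamma}^*\|_{L^p(\RR^n)}$, and since $\frac{1}{1+\gamma}<p\leq1$ the maximal-function characterization $\|f_{\gamma}^*\|_{L^p(\RR^n)}=\|f\|_{H^p(\RR^n)}$ recorded in $(\ref{abc6})$ finishes the argument. As in Propositions \ref{pp1} and \ref{pp2}, I would first prove everything for $f\in L^1(\RR^n)\cap H^p(\RR^n)$, where $f*\phi_t(u)$ is a genuine absolutely convergent integral (using the decay $(\ref{lp1})$ and $f\in L^1$), and then pass to general $f\in H^p(\RR^n)$ by density of $L^1(\RR^n)\cap H^p(\RR^n)$.

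The core of the argument is a dyadic decomposition of the non-compactly supported kernel. Fix a Lipschitz partition of unity $1=\sum_{k\geq0}\zeta_k$ with $\operatorname{supp}\zeta_0\subset B(0,2)$, $\operatorname{supp}\zeta_k\subset\{2^{k-1}\leq|x|<2^{k+1}\}$ for $k\geq1$, and $|\zeta_k(x+h)-\zeta_k(x)|\lesssim 2^{-k}|h|$, and set $\phi_k=\zeta_k\phi$, so that $\phi=\sum_{k\geq0}\phi_k$ with $\operatorname{supp}\phi_k\subset B(0,2^{k+1})$. From the decay hypothesis $(\ref{lp1})$ on the shell $|x|\sim2^k$ one gets $\|\phi_k\|_{L^\infty}\lesssim 2^{-k(n+\gamma)}$, and combining $(\ref{lp1})$, $(\ref{lp2})$ and the Lipschitz bound on $\zeta_k$ through the splitting $\phi_k(x+h)-\phi_k(x)=\zeta_k(x+h)\big(\phi(x+h)-\phi(x)\big)+\big(\zeta_k(x+h)-\zeta_k(x)\big)\phi(x)$ one obtains $H^\gamma(\phi_k)\lesssim 2^{-k(n+2\gamma)}$ (the regime $|h|\gtrsim2^k$ being handled trivially by the $L^\infty$ bound together with $|h|^{\gamma}\gtrsim 2^{k\gamma}$).

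Now fix $x$ and a pair $(u,t)$ with $|x-u|<t$. After dilation the kernel $y\mapsto(\phi_k)_t(u-y)=t^{-n}\phi_k\big((u-y)/t\big)$ is supported in $B(u,2^{k+1}t)\subset B(x,2^{k+2}t)$, has $L^\infty$-norm $\lesssim t^{-n}2^{-k(n+\gamma)}$ and Hölder seminorm $\lesssim t^{-n-\gamma}2^{-k(n+2\gamma)}$. Writing $r_k=2^{k+2}t$, one checks that both $\|(\phi_k)_t\|_{L^\infty}$ and $r_k^{\gamma}H^\gamma\big((\phi_k)_t\big)$ are $\lesssim c_k:=t^{-n}2^{-k(n+\gamma)}$, so $c_k^{-1}(\phi_k)_t(u-\cdot)$ is, up to a fixed multiplicative constant, an admissible test function in $(\ref{ssr3})$ centred at $x$ with radius $r_k$. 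Hence
\[
\left|f*(\phi_k)_t(u)\right|\leq C\,c_k\,r_k^{\,n}\,f_{\gamma}^*(x)\lesssim 2^{-k(n+\gamma)}2^{(k+2)n}\,f_{\gamma}^*(x)\lesssim 2^{-k\gamma}f_{\gamma}^*(x),
\]
and summing the geometric series $\sum_{k\geq0}2^{-k\gamma}<\infty$ (where $\gamma>0$ is used) yields $|f*\phi_t(u)|\lesssim f_{\gamma}^*(x)$ uniformly over $(u,t)$ with $|x-u|<t$; taking the supremum gives the desired pointwise bound.

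The main obstacle is the second step: producing the dyadic pieces $\phi_k$ while keeping \emph{both} normalizations sharp, i.e. verifying that multiplication by the cutoff $\zeta_k$ does not destroy the Hölder estimate and that the losses from $(\ref{lp2})$ and from the $2^{-k}$-Lipschitz constant of $\zeta_k$ combine to exactly $2^{-k(n+2\gamma)}$. This exponent is precisely what makes $r_k^\gamma H^\gamma\big((\phi_k)_t\big)$ balance $\|(\phi_k)_t\|_{L^\infty}$ and ultimately produces the summable factor $2^{-k\gamma}$; a weaker power of $(1+|x|)^{-1}$ in $(\ref{lp2})$ would make the series diverge, which is exactly why the hypotheses are stated with $(1+|x|)^{-(n+2\gamma)}$. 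Everything else — the dilation bookkeeping, recentring the ball from $u$ to $x$ via $|x-u|<t$, and the concluding density argument — is routine.
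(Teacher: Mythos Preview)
Your proposal is correct and follows essentially the same route as the paper. The paper also proves the pointwise bound $(f*\phi)_{\nabla}(x)\lesssim_{n,\gamma} f_{\gamma}^*(x)$ via a dyadic decomposition: it writes $1=\sum_{k\ge0}\psi_{k,x}(y)$ with smooth cutoffs centred at $x$ and scaled by $r$, checks that $(1+2^k)^{n+\gamma}\phi\big((s-y)/r\big)\psi_{k,x}(y)$ is an admissible test function for $f_{\gamma}^*$ at radius $\sim2^k r$, and sums the resulting factors $(2^k)^n(1+2^k)^{-(n+\gamma)}\sim2^{-k\gamma}$. Your version cuts up $\phi$ itself with origin-centred Lipschitz shells $\zeta_k$ rather than cutting up the $y$-integral with $x$-centred smooth shells, and you recentre from $u$ to $x$ afterwards; after a change of variables these are the same decomposition, and the arithmetic producing the summable factor $2^{-k\gamma}$ is identical. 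The density reduction to $L^1\cap H^p$ is unnecessary here (the pointwise bound holds for any $f\in(\Lambda^{\gamma})'$ by interpreting the convolution as a duality pairing, which is how the paper proceeds), but it does no harm.
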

\begin{proof}
Fix a positive $\varphi(t)\in S(\RR^n)$ so that  $supp\,\varphi(t)\subseteq B(0, 1)$, and $\varphi(t)=1$ for $t\in B(0, 1/2)$. Let
the functions $\psi_{k,x}(t)$ be defined as follows:
\begin{eqnarray*}
& &\psi_{0,x}(t)=\varphi\left(\frac{x-t}{r}\right), \\ \nonumber
& &\psi_{k,x}(t)=\varphi\left(\frac{x-t}{2^{k}r}\right)-\varphi\left(\frac{x-t}{2^{k-1}r}\right),\ \hbox{for}\  k\geq1.
\end{eqnarray*}
Thus $\psi_{k,x}(t)\in S(\RR^n) $ for $k \geq0$, with $supp\,\psi_{0,x}(t)\subseteq B(x, r)$, $supp\,\psi_{k,x}(t)\subseteq B(x, 2^{k+1}r)\setminus B(x, 2^{k-2}r) \,\hbox{for}\,k\geq1$. It is clear that $$\sum_{k=0}^{\infty}\psi_{k,x}(t)=1.$$
Then we could write $(f*\phi)_{\nabla}(x)$ as following:
\begin{eqnarray*}
(f*\phi)_{\nabla}(x)&=&\sup_{ |s-x|\leq r}\left|\int_{\RR^n}\phi\left(\frac{s-y}{r}\right)\sum_{k=0}^{\infty}\psi_{k,x}(y)f(y)dy/r^n\right|
\\&\leq& \sum_{k=0}^{+\infty}\sup_{ |s-x|\leq r }\left|\int_{\RR^n}\phi\left(\frac{s-y}{r}\right) \psi_{k,x}(y) f(y)dy/r^n\right|.
\end{eqnarray*}

It is clear that the function $y\rightarrow (1+2^k)^{n+\gamma}\phi\left(\frac{s-y}{r}\right)\psi_{k,x}(y)$ with $|s-x|<r$ satisfies the following:
\begin{eqnarray*}
\left\{ \begin{array}{cc}
 \left|(1+2^k)^{n+\gamma}\phi\left(\frac{s-y}{r}\right)\psi_{k,x}(y)\right|\lesssim1                                \\\\
  H^{\gamma}\left((1+2^k)^{n+\gamma}\phi\left(\frac{s-y}{r}\right)\psi_{k,x}(y)\right)\lesssim \left(2^kr\right)^{-\gamma}                            \\\\
   supp(1+2^k)^{n+\gamma}\phi\left(\frac{s-y}{r}\right)\psi_{k,x}(y)\subseteq B(x, 2^{k+1}r)\setminus B(x, 2^{k-2}r) \,\hbox{for}\,k\geq1.
 \end{array} \right.
 \end{eqnarray*}
Then we could deduce that:
\begin{eqnarray*}
(f*\phi)_{\nabla}(x)&=&\sup_{|s-x|\leq r}\left|\int_{\RR^n}\phi\left(\frac{s-y}{r}\right)f(y)dy/r^n\right|
\\&\leq& \sum_{k=0}^{+\infty}\frac{(2^{k})^n}{(1+2^k)^{n+\gamma}}\sup_{|s-x|\leq r}\left|\int_{\RR^n}(1+2^k)^{n+\gamma}\phi\left(\frac{s-y}{r}\right)\psi_{k,x}(y)f(y)dy/(2^{k}r)^n\right|
\\&\lesssim& \sum_{k=0}^{+\infty} \frac{(2^{k})^n}{(1+2^k)^{n+\gamma}}f_{\gamma}^*(x)
\\&\lesssim_{n, \gamma}& f_{\gamma}^*(x).
\end{eqnarray*}
Thus the following inequality holds for $\frac{1}{1+\gamma}<p\leq1$ $(0<\gamma\leq1)$:
$$\|(f*\phi)_{\nabla}\|_{L^p(\RR^n)}\lesssim_{n, p, \gamma}\|f\|_{H^p(\RR^n)}.$$
This proves the proposition.
\end{proof}

\begin{proposition}\label{kernel1}

$\phi(x)$ is a Lipschitz function ($\phi(x)\in \Lambda^{\gamma}$) without compact support in $\RR^n$ satisfying the following:
\begin{eqnarray}\label{lp6}
\left|\phi(x)\right|\lesssim \frac{1}{(1+|x|)^{n+\gamma}},
\end{eqnarray}
\begin{eqnarray}\label{lp7}
\left|\phi(x+h)-\phi(x)\right|\lesssim \frac{|h|^{\gamma}}{(1+|x|)^{n+2\gamma}},\ \ \hbox{if}\ |h|\lesssim1+|x|.
\end{eqnarray}
Then we could deduce the following inequalities for any fixed $\alpha$ with $ 0<\alpha<\gamma\leq1,$ and $r>0$:
$$0\le \left|\phi\left(\frac{a-y}{r}\right)-\phi\left(\frac{b-y}{r}\right)\right| \le
C \Big(\frac{|a-b|}{r}\Big)^{\alpha}\Big(1+\frac{|x-y|}{r}\Big)^{-(\gamma-\alpha)-n}, $$
and
\begin{eqnarray*}
& &\left|\left(\phi\left(\frac{a-y}{r}\right)-\phi\left(\frac{b-y}{r}\right)\right)-\left(\phi\left(\frac{a-z}{r}\right)-\phi\left(\frac{b-z}{r}\right)\right)\right|
\\&\leq & C \Big(\frac{|a-b|}{r}\Big)^{\alpha}\Big(\frac{|y-z|}{r}\Big)^{\gamma-\alpha}\Big(1+\frac{|x-y|}{r}\Big)^{-2(\gamma-\alpha)-n},
\end{eqnarray*}
for $|a-
b|\lesssim r $, $\frac{|y-
z|}{r}\leq C_3\min\{1+\frac{|a-y|}{r}, 1+\frac{|a-z|}{r}\} $, $x\in B(a, 2r)\bigcap B(b, 2r)$.
\end{proposition}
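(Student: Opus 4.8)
The plan is to reduce both inequalities to the single-increment hypotheses (\ref{lp6}) and (\ref{lp7}) for $\phi$, after first recording the geometric comparabilities forced by the side conditions. Write $A=\frac{a-y}{r}$, $B=\frac{b-y}{r}$, $A'=\frac{a-z}{r}$, $B'=\frac{b-z}{r}$ and $w=\frac{y-z}{r}$, so that $A'=A+w$, $B'=B+w$ and $A-B=A'-B'=\frac{a-b}{r}$. The first thing I would prove is the comparability
$$1+|A|\sim 1+|B|\sim 1+|A'|\sim 1+|B'|\sim 1+\tfrac{|x-y|}{r}.$$
Here $1+|A|\sim1+|B|$ and $1+|A'|\sim1+|B'|$ follow from $|A-B|=|A'-B'|=\frac{|a-b|}{r}\lesssim1$, and comparability with $1+\frac{|x-y|}{r}$ follows from $x\in B(a,2r)\cap B(b,2r)$. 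The delicate point is that $1+|A'|$ must not degenerate against $1+|A|$: from $|w|\le C_3(1+|A|)$ we get $1+|A'|\lesssim1+|A|$, and from $|w|\le C_3(1+|A'|)$ we get the reverse, which is precisely why the hypothesis uses the two-sided $\min$. These same bounds give $\frac{|a-b|}{r}\lesssim1\lesssim 1+|A|$ and $|w|\lesssim 1+|A|$, the admissibility needed to invoke (\ref{lp7}).

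For the first inequality I would apply (\ref{lp7}) to the increment $B-A$ based at $A$:
$$\big|\phi(A)-\phi(B)\big|\lesssim\Big(\tfrac{|a-b|}{r}\Big)^{\gamma}(1+|A|)^{-(n+2\gamma)}.$$
Since $\frac{|a-b|}{r}\lesssim1$ we may write $(\cdot)^{\gamma}=(\cdot)^{\alpha}(\cdot)^{\gamma-\alpha}\lesssim(\cdot)^{\alpha}$, and then enlarge $(1+|A|)^{-(n+2\gamma)}$ to $(1+\frac{|x-y|}{r})^{-(\gamma-\alpha)-n}$ using the comparability; nonnegativity of the left-hand side is trivial. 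This gives the first asserted bound.

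The main work is the mixed second-difference estimate, handled by geometric interpolation between two bounds that hold simultaneously. Writing $G=\big(\phi(A)-\phi(A')\big)-\big(\phi(B)-\phi(B')\big)$, I would establish a \emph{size} bound by applying (\ref{lp7}) to the increment $w$ at $A$ and at $B$,
$$|G|\le\big|\phi(A)-\phi(A')\big|+\big|\phi(B)-\phi(B')\big|\lesssim|w|^{\gamma}(1+|A|)^{-(n+2\gamma)},$$
and a \emph{smoothness} bound by applying (\ref{lp7}) to the increment $B-A$ at $A$ and at $A'$,
$$|G|\le\big|\phi(A)-\phi(B)\big|+\big|\phi(A')-\phi(B')\big|\lesssim|A-B|^{\gamma}(1+|A|)^{-(n+2\gamma)},$$
where the comparability $1+|A'|\sim1+|A|$ unifies the two decay factors. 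Taking the weighted geometric mean with weight $1-\alpha/\gamma$ on the size bound and $\alpha/\gamma$ on the smoothness bound turns $|w|^{\gamma}$ and $|A-B|^{\gamma}$ into $|w|^{\gamma-\alpha}|A-B|^{\alpha}$ and preserves the decay, yielding
$$|G|\lesssim\Big(\tfrac{|a-b|}{r}\Big)^{\alpha}\Big(\tfrac{|y-z|}{r}\Big)^{\gamma-\alpha}(1+|A|)^{-(n+2\gamma)}.$$
Since $-(n+2\gamma)\le-2(\gamma-\alpha)-n$, this is stronger than required, and rewriting the decay through $1+|A|\sim1+\frac{|x-y|}{r}$ completes the proof.

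I expect the only real obstacle to be the bookkeeping of the four shifted arguments in the comparability step, and specifically checking that the $\min$ condition keeps $1+\frac{|a-z|}{r}$ from collapsing; without that, the decay produced in the smoothness bound could not be transferred back to $1+\frac{|x-y|}{r}$. Once the comparability is secured, both inequalities follow from (\ref{lp7}) and elementary interpolation.
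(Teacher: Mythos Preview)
Your proof is correct and follows essentially the same approach as the paper: both establish the comparabilities $1+|A|\sim1+|B|\sim1+|A'|\sim1+|B'|\sim1+\frac{|x-y|}{r}$, derive the first inequality directly from (\ref{lp7}), and for the second difference obtain the two bounds $|G|\lesssim|w|^{\gamma}(1+|A|)^{-n-2\gamma}$ and $|G|\lesssim|A-B|^{\gamma}(1+|A|)^{-n-2\gamma}$ by grouping the four terms the two possible ways. The only cosmetic difference is that the paper combines these via a case split on whether $|a-b|\le|y-z|$ or $|a-b|\ge|y-z|$, whereas you take the weighted geometric mean; these are equivalent, since $\min(P,Q)\le P^{\theta}Q^{1-\theta}$ for $\theta\in[0,1]$.
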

\begin{proof}
From the fact that $|a-
b|\lesssim r $, $\frac{|y-
z|}{r}\leq C_3\min\{1+\frac{|a-y|}{r}, 1+\frac{|a-z|}{r}\} $, the following relations could be obtained:
\begin{eqnarray}\label{1120}
1+\frac{|a-y|}{r}\sim1+\frac{|b-y|}{r}, 1+\frac{|a-z|}{r}\sim1+\frac{|b-z|}{r},\  and\, 1+\frac{|a-z|}{r}\sim1+\frac{|a-y|}{r}.
\end{eqnarray}
First,  we will consider the case when $|a- b|\leq |y- z|.$ Then from Formula\,(\ref{lp7}), we could get
\begin{eqnarray}\label{1119}
\left|\phi\left(\frac{a-y}{r}\right)-\phi\left(\frac{b-y}{r}\right)\right| &\le&
C\Big(\frac{|a- b|}{r}\Big)^{\gamma}\Big(1+\frac{|a- y|}{r}\Big)^{-2\gamma-n}
\\&\le&\nonumber
C\Big(\frac{|a- b|}{r}\Big)^{\gamma}\Big(1+\frac{|a- y|}{r}\Big)^{-\gamma-\alpha}\Big(1+\frac{|a- y|}{r}\Big)^{-(\gamma-\alpha)-n}
\\&\le&\nonumber
C \Big(\frac{|a- b|}{r}\Big)^{\alpha}\Big(1+\frac{|a- y|}{r}\Big)^{-(\gamma-\alpha)-n}.
\end{eqnarray}
Also we could obtain
$$\left|\phi\left(\frac{a-y}{r}\right)-\phi\left(\frac{b-y}{r}\right)\right| \le
C\Big(\frac{|a- b|}{r}\Big)^{\gamma}\Big(1+\frac{|a- y|}{r}\Big)^{-2\gamma-n},$$
and
$$\left|\phi\left(\frac{a-z}{r}\right)-\phi\left(\frac{b-z}{r}\right)\right| \le
C\Big(\frac{|a- b|}{r}\Big)^{\gamma}\Big(1+\frac{|a- z|}{r}\Big)^{-2\gamma-n}.$$
Together with Formula\,(\ref{1120}), we could conclude
\begin{eqnarray*}
& &\left|\left(\phi\left(\frac{a-y}{r}\right)-\phi\left(\frac{b-y}{r}\right)\right)-\left(\phi\left(\frac{a-z}{r}\right)-\phi\left(\frac{b-z}{r}\right)\right)\right|
\\&\leq & C\Big(\frac{|a- b|}{r}\Big)^{\gamma}\Big(1+\frac{|a- y|}{r}\Big)^{-2\gamma-n}.
\end{eqnarray*}
By the fact $|a-b|\leq |y-z|$ and $\displaystyle{1\lesssim 1+\frac{|a-y|}{r}}$, we could obtain:
$$\Big(\frac{|a-b|}{r}\Big)^{\gamma}\Big(1+\frac{|a-y|}{r}\Big)^{-2\gamma-n}\lesssim\Big(\frac{|a-b|}{r}\Big)^{\alpha}\Big(\frac{|y-z|}{r}\Big)^{\gamma-\alpha}\Big(1+\frac{|a-y|}{r}\Big)^{-2(\gamma-\alpha)-n}.$$
Then for  $|a-b|\leq |y-z|,$ the Formula
\begin{eqnarray}\label{exu1}
& &\left|\left(\phi\left(\frac{a-y}{r}\right)-\phi\left(\frac{b-y}{r}\right)\right)-\left(\phi\left(\frac{a-z}{r}\right)-\phi\left(\frac{b-z}{r}\right)\right)\right|\nonumber
\\&\leq & C \Big(\frac{|a-b|}{r}\Big)^{\alpha}\Big(\frac{|y-z|}{r}\Big)^{\gamma-\alpha}\Big(1+\frac{|a-y|}{r}\Big)^{-2(\gamma-\alpha)-n}
\end{eqnarray}
holds.
In a similar way, we will obtain the Formula\,(\ref{exu1}) for the case when $|a-b|\geq |y-z|.$
Notice that by Formula\,(\ref{1120}),
$$\left|\phi\left(\frac{a-y}{r}\right)-\phi\left(\frac{a-z}{r}\right)\right|\le
C\Big(\frac{|y-z|}{r}\Big)^{\gamma}\Big(1+\frac{|a-y|}{r}\Big)^{-2\gamma-n},$$
and
\begin{eqnarray*}
\left|\phi\left(\frac{b-y}{r}\right)-\phi\left(\frac{b-z}{r}\right)\right|&\le&
C\Big(\frac{|y-z|}{r}\Big)^{\gamma}\Big(1+\frac{|b-y|}{r}\Big)^{-2\gamma-n}
\\&\le&
C\Big(\frac{|y-z|}{r}\Big)^{\gamma}\Big(1+\frac{|a-y|}{r}\Big)^{-2\gamma-n}
\end{eqnarray*}
hold. Then we could obtain
\begin{eqnarray*}
& &\left|\left(\phi\left(\frac{a-y}{r}\right)-\phi\left(\frac{b-y}{r}\right)\right)-\left(\phi\left(\frac{a-z}{r}\right)-\phi\left(\frac{b-z}{r}\right)\right)\right|
\\&\leq & C \Big(\frac{|y-z|}{r}\Big)^{\gamma}\Big(1+\frac{|a-y|}{r}\Big)^{-2\gamma-n}.
\end{eqnarray*}
By the fact $|a-b|\geq |y-z|$ and $\displaystyle{1\lesssim 1+\frac{|a-y|}{r}}$, the following holds:
$$\Big(\frac{|y-z|}{r}\Big)^{\gamma}\Big(1+\frac{|a-y|}{r}\Big)^{-2\gamma-n}\lesssim\Big(\frac{|a-b|}{r}\Big)^{\alpha}\Big(\frac{|y-z|}{r}\Big)^{\gamma-\alpha}\Big(1+\frac{|a-y|)}{r}\Big)^{-2(\gamma-\alpha)-n}.$$
Then for  $|a-b|\geq |y-z|,$ we could get
\begin{eqnarray}\label{exu2}
& &\left|\left(\phi\left(\frac{a-y}{r}\right)-\phi\left(\frac{b-y}{r}\right)\right)-\left(\phi\left(\frac{a-z}{r}\right)-\phi\left(\frac{b-z}{r}\right)\right)\right|\nonumber
\\&\leq & C \Big(\frac{|a-b|}{r}\Big)^{\alpha}\Big(\frac{|y-z|}{r}\Big)^{\gamma-\alpha}\Big(1+\frac{|a-y|}{r}\Big)^{-2(\gamma-\alpha)-n}.
\end{eqnarray}
By the fact that $x\in B(a, 2r)\bigcap B(b, 2r)$, we could deduce that:
\begin{eqnarray}\label{1121}
1+\frac{|a-y|}{r}\sim1+\frac{|x-y|}{r}.
\end{eqnarray}
Formulas\,(\ref{1119}, \ref{exu1}, \ref{exu2}, \ref{1121})\,yeald the Proposition.
\end{proof}

\begin{proposition}\label{no11}
For  $\displaystyle{\ 1\geq p> \frac{1}{1+\gamma}}$, $\phi(x)$ is a Lipschitz function ($\phi(x)\in \Lambda^{\gamma}$) without compact support in $\RR^n$ satisfying Formulas\,(\ref{lp6}, \ref{lp7}). For $f\in L^1(\RR^n)$,  if the following inequality holds
$$\|(f*\phi)_{\nabla}\|_{L^p(\RR^n)}\sim\|f_{\gamma}^* \|_{L^p(\RR^n)}$$
then  we could deduce  that:
$$\|(f*\phi)_{\nabla}\|_{L^p(\RR^n)}\leq C\|(f*\phi)_{+} \|_{L^p(\RR^n)},$$
where $C$ is dependent on $p$ and $\gamma$.
\end{proposition}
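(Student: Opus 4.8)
The plan is to follow the same scheme as the proof of Proposition~\ref{pp2}, replacing the compactly-supported kernel estimates~(\ref{ut1}), (\ref{ut2}), (\ref{ut6}) by the decay estimates supplied by Proposition~\ref{kernel1}, and using the standing hypothesis $\|(f*\phi)_{\nabla}\|_{L^p(\RR^n)}\sim\|f_{\gamma}^*\|_{L^p(\RR^n)}$ in the role that Proposition~\ref{pp1} played there. As in that argument, I would first fix an auxiliary exponent $\alpha$ with $0<\alpha<\gamma$ and work under the stronger restriction $p>\frac{1}{1+\gamma-\alpha}$, removing $\alpha$ only at the very end. I would introduce the good set $F=\{x\in\RR^n: f_{\gamma-\alpha}^*(x)\leq\sigma(f*\phi)_{\nabla}(x)\}$. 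Combining the equivalences $\|f_{\gamma-\alpha}^*\|_{L^p}\sim\|f_{\gamma}^*\|_{L^p}$ (as in the proof of Proposition~\ref{pp2}, via Proposition~\ref{sk}) with the hypothesis $\|(f*\phi)_{\nabla}\|_{L^p}\sim\|f_{\gamma}^*\|_{L^p}$, I obtain on the complementary set the Chebyshev-type bound $\int_{F^c}|(f*\phi)_{\nabla}|^p\leq C\sigma^{-p}\int_{\RR^n}|(f*\phi)_{\nabla}|^p$, exactly as in~(\ref{ut5}); choosing $\sigma$ large then yields $\int_{\RR^n}|(f*\phi)_{\nabla}|^p\lesssim\int_{F}|(f*\phi)_{\nabla}|^p$, so it remains to dominate the integral over $F$ by $\|(f*\phi)_{+}\|_{L^p}$.

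The core of the argument is the pointwise inequality $(f*\phi)_{\nabla}(x)\leq C\,[M([(f*\phi)_{+}]^q)(x)]^{1/q}$ for $x\in F$ and any $q>0$. Fixing $x_0\in F$ and a near-optimal pair $(u_0,r_0)$ with $|u_0-x_0|<r_0$ and $|f*\phi_{r_0}(u_0)|>\tfrac12(f*\phi)_{\nabla}(x_0)$, I would estimate, for $u$ with $|u-u_0|<\delta r_0$, the difference $|f*\phi_{r_0}(u)-f*\phi_{r_0}(u_0)|$ through the kernel $\phi(\frac{u-y}{r_0})-\phi(\frac{u_0-y}{r_0})$. This is precisely where the non-compact support matters: unlike in Proposition~\ref{pp2}, this kernel is not supported in a single ball, so I would decompose it against the dyadic partition of unity $\{\psi_{k,x_0}\}$ introduced in Proposition~\ref{lp51}. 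Proposition~\ref{kernel1} then guarantees that the $k$-th piece, after normalization, is admissible for $f_{\gamma-\alpha}^*(x_0)$ up to the factor $\delta^{\alpha}(1+2^k)^{-(\gamma-\alpha)-n}$: its first estimate controls the sup-norm and its second estimate controls the $H^{\gamma-\alpha}$-seminorm on the annulus $B(x_0,2^{k+1}r_0)\setminus B(x_0,2^{k-2}r_0)$.

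Summing the resulting series $\sum_{k}(2^{k})^{n}(1+2^k)^{-(\gamma-\alpha)-n}$, which converges precisely because $\gamma-\alpha>0$, gives $|f*\phi_{r_0}(u)-f*\phi_{r_0}(u_0)|\leq C\delta^{\alpha}f_{\gamma-\alpha}^*(x_0)\leq C\delta^{\alpha}\sigma(f*\phi)_{\nabla}(x_0)$ for $x_0\in F$. Choosing $\delta$ so small that $C\delta^{\alpha}\sigma\leq\tfrac14$ forces $|f*\phi_{r_0}(u)|\geq\tfrac14(f*\phi)_{\nabla}(x_0)$ for every $u\in B(u_0,\delta r_0)$, and averaging the $q$-th power of $|f*\phi_{r_0}(\cdot)|$ over this ball produces the claimed maximal-function bound with the radial maximal function $(f*\phi)_{+}$. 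I expect this dyadic decomposition of the non-compactly-supported difference kernel to be the main obstacle, since it requires verifying both admissibility conditions for $f_{\gamma-\alpha}^*$ simultaneously on each annulus and checking the convergence of the series — which is exactly what Proposition~\ref{kernel1} is engineered to supply.

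The remainder is routine. Fixing $q<p$ and applying the Hardy--Littlewood maximal theorem to the pointwise bound gives $\int_{F}|(f*\phi)_{\nabla}|^p\leq C\int_{\RR^n}\{M([(f*\phi)_{+}]^{q})\}^{p/q}\leq C\int_{\RR^n}|(f*\phi)_{+}|^{p}$, which together with the good-set reduction yields $\|(f*\phi)_{\nabla}\|_{L^p}\leq C\|(f*\phi)_{+}\|_{L^p}$ for every $p>\frac{1}{1+\gamma-\alpha}$. Finally I would remove $\alpha$ exactly as at the end of Proposition~\ref{pp2}: given $p>\frac{1}{1+\gamma}$, set $p_0=\tfrac12(p+\frac{1}{1+\gamma})$ and $\alpha=1+\gamma-\frac{1}{p_0}$, so that $0<\alpha<\gamma$ and $\frac{1}{1+\gamma-\alpha}=p_0<p$, which delivers the desired inequality for all admissible $p$ and proves the proposition.
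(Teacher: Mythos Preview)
Your proposal is correct and follows essentially the same approach as the paper's proof. The paper is terser at the key step, writing only ``by Proposition~\ref{kernel1} and Proposition~\ref{lp51}'' to obtain $|F(u,r_0)-F(u_0,r_0)|\leq C\delta^{\alpha}f_{\gamma-\alpha}^*(x_0)$, but this is exactly the dyadic decomposition of the difference kernel against $\{\psi_{k,x_0}\}$ that you spell out explicitly; the good-set reduction, the maximal-function averaging, and the removal of~$\alpha$ at the end are all identical.
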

\begin{proof}
By Proposition\,\ref{sk}, we could deduce that the following holds for  $f\in L^1(\RR^n)\bigcap H^p(\RR^n)$:
$$\|f_{\gamma-\alpha}^*\|_{L^p(\RR^n)}\sim_{\gamma, \alpha}\|f_{S(\gamma-\alpha)}^*\|_{L^p(\RR^n)}\sim_{\gamma, \alpha}\|f_{S\gamma}^*\|_{L^p(\RR^n)}\sim_{\gamma, \alpha}\|f_{\gamma}^*\|_{L^p(\RR^n)}.$$
For any fixed $\alpha$ satisfying $ 0<\alpha<\gamma$ and $\displaystyle{\ 1\geq p> \frac{1}{1+\gamma-\alpha}}$. We use $F$ to denote as: $$F=\bigg\{x\in\RR^n: f_{\gamma-\alpha}^*(x)\leq\sigma (f*\phi)_{\nabla}(x)\bigg\}.$$
Then it is clear that
\begin{eqnarray}\label{sv1}
\int_{F^c} |(f*\phi)_{\nabla}(x)|^pdx\leq \frac{C}{\sigma^p} \int_{F^c}| f_{\gamma-\alpha}^*(x)|^p dx\leq \frac{C_{\gamma, \alpha}'}{\sigma^p} \int_{\RR^n}| f_{\gamma}^*(x)|^p dx\leq \frac{C_{\gamma, \alpha}''}{\sigma^p} \int_{\RR^n}|(f*\phi)_{\nabla}(x)|^p dx.
\end{eqnarray}\\
Choosing $\sigma^p\geq2C_{\gamma, \alpha}''$, then the following holds:
\begin{eqnarray}\label{ex5}
 \int_{\RR^n}|(f*\phi)_{\nabla}(x)|^p dx\lesssim\int_{F} |(f*\phi)_{\nabla}(x)|^pdx.
\end{eqnarray}
We use $D f(x)$ and $F(x, r)$ to denote  as: $$ D f(x)=\sup_{t>0}\left|f*\phi_t(x)\right|,\ \ \ F(x, t)=f*\phi_t(x).$$
Next, we will show that for any $q>0$,
\begin{eqnarray}\label{ex6}
(f*\phi)_{\nabla}(x)\leq C \left[M\left(D f\right)^q(x)\right]^{1/q} \ \ \hbox{for}\ x\in F,
\end{eqnarray}
where $M$ is the Hardy-Littlewood maximal operator. For any fixed $x_0\in F$,  there exists $(u_0, r_0)$ satisfying $|u_0-x_0|<r_0$ such that  the following inequality holds:
\begin{eqnarray}\label{ex7}
 \left|F(u_0, r_0)\right|>\frac{1}{2} (f*\phi)_{\nabla}(x_0).
\end{eqnarray}
Choosing $\delta<1$ small enough and  $u$ satisfying $|u-u_0|< \delta r_0$,  we could deduce that
\begin{eqnarray*}
|F(u, r_0)-F(u_0, r_0)|&=&\left|\int_{\RR^n} f(y)\phi\left(\frac{u-y}{r_0}\right)\frac{dy}{r_0^n} -\int_{\RR^n} f(y)\phi\left(\frac{u_0-y}{r_0}\right)\frac{dy}{r_0^n}  \right|
\\&\leq& \left|  \int_{\RR^n} f(y)  \left(\phi\left(\frac{u-y}{r_0}\right)- \phi\left(\frac{u_0-y}{r_0}\right)\right)\frac{dy}{r_0^n}                 \right|.
\end{eqnarray*}
Notice that $\left(\phi\left(\frac{u-y}{r_0}\right)- \phi\left(\frac{u_0-y}{r_0}\right)\right)$ is a new kernel, thus by Proposition\,\ref{kernel1} and Proposition\,\ref{lp51}, we could obtain:
\begin{eqnarray*}
|F(u, r_0)-F(u_0, r_0)|
\leq C\delta^{\alpha} f_{\gamma-\alpha}^*(x_0)\leq C\delta^{\alpha}\sigma (f*\phi)_{\nabla}(x_0) \ \ \hbox{for}\ x_0\in F.
\end{eqnarray*}
Taking $\delta$ small enough such that $ C\delta^{\alpha}\sigma\leq1/4$, then $$\left|F(u, r_0)\right|\geq \frac{1}{4}(f*\phi)_{\nabla}(x_0) \ \ \hbox{for}\ u\in B(u_0, \delta r_0 ).$$
Thus the following inequality holds for any $x_0\in F$,
\begin{eqnarray*}
 \left|(f*\phi)_{\nabla}(x_0) \right|^q &\leq& \left|\frac{1}{B(u_0, \delta r_0 )}\right| \int_{B(u_0, \delta r_0 )}  4^q|F(u, r_0)|^q du
 \\&\leq&   \left|\frac{B(x_0, (1+\delta) r_0 )}{B(u_0, \delta r )}\right|  \left|\frac{1}{B(x_0, (1+\delta) r_0 )}\right|\int_{B(x_0, (1+\delta) r_0)}  4^q|F(u, r_0)|^qdu
\\&\leq&\left(\frac{1+\delta}{\delta}\right)^n\left|\frac{1}{B(x_0, (1+\delta) r_0)}\right|\int_{B(x_0, (1+\delta) r_0)}  4^q|F(u, r_0)|^qdu
\\&\leq& C M[(D f)^q](x_0),
\end{eqnarray*}
where C is a constant independent on $x_0$. Finally, using the maximal theorem for $M$ when $q<p$ leads to
\begin{eqnarray}\label{3}
\int_{F} \left|(f*\phi)_{\nabla}(x) \right|^p dx\leq C \int_{\RR^n}\left\{M[(D f)^{q}](x)\right\}^{p/q}dx \leq C \int_{\RR^n}\left|(f*\phi)_{\nabla}(x)\right|^{p}dx.
\end{eqnarray}
Thus for any fixed $\alpha$ satisfying $ 0<\alpha<\gamma$ and $\displaystyle{\ p> \frac{1}{1+\gamma-\alpha}}$, the above Formula\,(\ref{3}) combined with Formula\,(\ref{ex5}) leads to
\begin{eqnarray}\label{4}
\|(f*\phi)_{\nabla}\|_{L^p(\RR^n)}\leq C\|(f*\phi)_{+} \|_{L^p(\RR^n)},
\end{eqnarray}
where $C$ is dependent on $p$ and $\alpha$. Next we will remove the number $\alpha$.  For any $\displaystyle{\ 1\geq p> \frac{1}{1+\gamma}}$, let $p_0=\frac{1}{2}\left(p+\frac{1}{1+\gamma}\right)$ with $\displaystyle{\ p> p_0>\frac{1}{1+\gamma}}$ and let $\alpha= 1+\gamma -\frac{1}{p_0}$. Thus it is clear that $$p_0=\frac{1}{1+\gamma-\alpha},\ \ \ p>p_0.$$  Thus by Formula\,(\ref{4}),   we could obtain the following inequality holds for $\displaystyle{\ 1\geq p> \frac{1}{1+\gamma}}$
$$\|(f*\phi)_{\nabla}\|_{L^p(\RR^n)}\leq C\|(f*\phi)_{+} \|_{L^p(\RR^n)},$$
where $C$ is dependent on $p$ and $\gamma$. This proves the Proposition.
\end{proof}

\begin{proposition}\label{exx5}
For $N>[\frac{n}{p}]+1$ ($0<p\leq1$), $\phi(x)$ is a Lipschitz function ($\phi(x)\in \Lambda^{\gamma}$) without compact support in $\RR^n$ satisfying:
\begin{eqnarray}\label{up1}
\left|\phi(x)\right|\lesssim \frac{1}{(1+|x|)^{n+N+1}},
\end{eqnarray}
\begin{eqnarray}\label{up2}
\left|\phi(x+h)-\phi(x)\right|\lesssim \frac{|h|^{\gamma}}{(1+|x|)^{n+2\gamma}},\ \ \hbox{if}\ |h|\lesssim1+|x|.
\end{eqnarray}
Then there exists sequence $\{\phi^k(x): \phi^k(x)\in C_c(\RR^n)\}_{k=1}^{+\infty}$ such that:\\
{\rm(i)} \ $ supp\,\phi^k(x) \subseteq B(0, k)$;\\
{\rm(ii)} $ \lim_{k\rightarrow+\infty}\| \phi^k(x)-\phi(x)\|_{\infty} =0 $;\\
{\rm(iii)} $$\left|\phi^k(x)\right|\leq C_1 \frac{1}{(1+|x|)^{n+N+1}};$$
{\rm(iv)}  If $|h|\lesssim1+|x|$, then
$$
\left|\phi^k(x+h)-\phi^k(x)\right|\leq C_2 \frac{|h|^{\gamma}}{(1+|x|)^{n+2\gamma}};$$
{\rm(v)} The following two inequalities hold:
\begin{eqnarray*}
|\phi^k(x)-\phi(x)|\leq C_3 \left(\frac{1}{k}\right)^{\gamma/2} \frac{1}{(1+|x|)^{n+N+1-\frac{\gamma}{2}}},
\end{eqnarray*}
\begin{eqnarray*}
\int_{\RR^n}|\phi^k(x)-\phi(x)|dx\lesssim_\gamma \left(\frac{1}{k}\right)^{\gamma/2};
\end{eqnarray*}
where $C_1$, $C_2$, and $C_3$ are constants independent on $k$.
\end{proposition}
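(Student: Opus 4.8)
The natural construction is to truncate $\phi$ by a dilated smooth cutoff. The plan is to fix once and for all a function $\eta\in C^\infty(\RR^n)$ with $0\le\eta\le1$, $\eta(x)=1$ for $|x|\le 1/2$ and $\eta(x)=0$ for $|x|\ge1$, and to set $\eta_k(x)=\eta(x/k)$ and $\phi^k(x)=\phi(x)\eta_k(x)$. Then $\phi^k$ is continuous with $\operatorname{supp}\phi^k\subseteq B(0,k)$, so (i) holds, and $|\phi^k(x)|\le|\phi(x)|$ gives (iii) immediately from (\ref{up1}). The difference $\phi^k-\phi=\phi\,(\eta_k-1)$ vanishes on $B(0,k/2)$ and is bounded by $|\phi|$ elsewhere, so $\|\phi^k-\phi\|_\infty\le\sup_{|x|>k/2}|\phi(x)|\lesssim k^{-(n+N+1)}\to0$, which is (ii).

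For (v) I would exploit that $\phi^k-\phi$ is supported in $\{|x|>k/2\}$, where $1+|x|\gtrsim k$. Writing $(1+|x|)^{-(n+N+1)}=(1+|x|)^{-\gamma/2}(1+|x|)^{-(n+N+1-\gamma/2)}$ and bounding the first factor by $(k/2)^{-\gamma/2}\lesssim k^{-\gamma/2}$ yields the pointwise estimate in (v). The $L^1$ estimate then follows by integrating in polar coordinates: $\int_{|x|>k/2}(1+|x|)^{-(n+N+1)}\,dx\lesssim k^{-(N+1)}\le k^{-\gamma/2}$, since $N+1\ge\gamma/2$. (Note that any exponent in $(0,n+N+1]$ could replace $\gamma/2$ here; this particular choice is only a matter of convenience.)

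The main obstacle is (iv), the uniform Lipschitz bound, because the cutoff produces a term carrying a factor $1/k$ from $\nabla\eta_k$, and one must check that the decay of $\phi$ at the transition radius $|x|\sim k$ compensates it. I would split
\[
\phi^k(x+h)-\phi^k(x)=\big[\phi(x+h)-\phi(x)\big]\eta_k(x+h)+\phi(x)\big[\eta_k(x+h)-\eta_k(x)\big].
\]
The first term is controlled by (\ref{up2}) together with $0\le\eta_k\le1$. For the second, a nonzero contribution of $\eta_k(x+h)-\eta_k(x)$ occurs only where $k/2<|x|\lesssim k$, so $1+|x|\sim k$; using $|\eta_k(x+h)-\eta_k(x)|\lesssim |h|/k$, the bound (\ref{up1}) for $\phi$, and $|h|\le|h|^\gamma|h|^{1-\gamma}\lesssim|h|^\gamma k^{1-\gamma}$ (valid since $|h|\lesssim 1+|x|\sim k$), the second term is $\lesssim |h|^\gamma k^{-(n+N+1+\gamma)}\lesssim |h|^\gamma k^{-(n+2\gamma)}\sim |h|^\gamma(1+|x|)^{-(n+2\gamma)}$, where the last comparison uses $N+1\ge\gamma$. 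This gives (iv) with a constant independent of $k$, completing the verification of all five properties.
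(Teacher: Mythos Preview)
Your construction $\phi^k(x)=\phi(x)\,\eta(x/k)$ is exactly the one the paper uses (there the cutoff is called $\psi$), and your verification of (i)--(v) follows the same lines; in fact you supply the details for (iv) that the paper simply declares ``clear.'' One small point: in your treatment of the second term in (iv) you assert the nonzero region satisfies $1+|x|\sim k$, but only the lower bound $1+|x|\gtrsim k$ is automatic, while the upper bound $1+|x|\lesssim k$ uses that the implicit constant in $|h|\lesssim 1+|x|$ is below $1$. This is harmless (one may always restrict to that range, or alternatively use the $\gamma$-H\"older bound $|\eta_k(x+h)-\eta_k(x)|\lesssim(|h|/k)^\gamma$ together with $1/k\lesssim(1+|x|)^{-1}$ on the support, which needs only the lower bound), so the argument goes through.
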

\begin{proof}
 Let $\psi(t)\in S(\RR^n)$ to be fixed satisfying $0<\psi(t)\leq1, \|H^{\gamma}\psi\|_{L^{\infty}}\leq C $, $supp\,\psi(t)\subseteq B(0, 1)$, $\psi(t)=1 $ when $t\in B(0, 1/2)$. The function $\phi^k(x)$ is  defined as: $$\phi^k(x)=\phi(x)\psi\left(\frac{x}{k}\right),\ \ \ k=1,\,2,\,3,\, \cdots.$$
Then it is clear that the sequence $\{\phi^k(x)\}_{k=1}^{+\infty}$ satisfies  {\rm(i)}, {\rm(ii)}, {\rm(iii)}, {\rm(iv)}.  When $|x|\leq \frac{k}{2}$, $|\phi^k(x)-\phi(x)|=0.$  When $|x|\geq \frac{k}{2}$,
\begin{eqnarray*}
|\phi^k(x)-\phi(x)|\le
C \frac{1}{(1+|x|)^{n+N+1}}
\leq C  \left(\frac{1}{k}\right)^{\gamma/2} \frac{1}{(1+|x|)^{n+N+1-\frac{\gamma}{2}}}.
\end{eqnarray*}
Then
\begin{eqnarray}\label{exexu1}
|\phi^k(x)-\phi(x)|\leq C_3 \left(\frac{1}{k}\right)^{\gamma/2} \frac{1}{(1+|x|)^{n+N+1-\frac{\gamma}{2}}}.
\end{eqnarray}
Thus
\begin{eqnarray}\label{exexu6}
\int_{\RR^n}|\phi^k(x)-\phi(x)|dx\lesssim_\gamma \left(\frac{1}{k}\right)^{\gamma/2}.
\end{eqnarray}
\end{proof}
By  Proposition\,\ref{exx5}, we could obtain the following Proposition:
\begin{proposition}\label{exx6}
$\phi(x)$ is a Lipschitz function ($\phi(x)\in \Lambda^{\gamma}$)($0<\gamma\leq1$) without compact support in $\RR^n$ satisfying Formulas\,(\ref{up1}, \ref{up2}). Then there exists sequence $\{\psi^k(x): \psi^k(x)\in S(\RR^n)\}_{k=1}^{\infty}$ satisfying the following:\\
{\rm(i)} \ $ supp\,\psi^k(x) \subseteq B(0, 2k)$;\\
{\rm(ii)} $ \lim_{k\rightarrow\infty}\| \psi^k(x)-\psi(x)\|_{\infty} =0 $;\\
{\rm(iii)}For $N\geq[\frac{n}{p}]+1$ ($0<p\leq1$), $\displaystyle{\left|\psi^k(x)\right|\leq C_1 \frac{1}{(1+|x|)^{n+N+1}};}$ \\
{\rm(iv)}  If $|h|\lesssim1+|x|$, then
$
\displaystyle{\left|\psi^k(x+h)-\psi^k(x)\right|\leq C_2 \frac{|h|^{\gamma}}{(1+|x|)^{n+2\gamma}};}$\\
{\rm(v)} The following two inequalities hold:
\begin{eqnarray*}
|\psi^k(x)-\phi(x)|\leq C_4 \left(\frac{1}{k}\right)^{\gamma/2} \frac{1}{(1+|x|)^{n+\frac{\gamma}{2}}},
\end{eqnarray*}
\begin{eqnarray*}
\int_{\RR^n}|\psi^k(x)-\phi(x)|dx\lesssim_\gamma \left(\frac{1}{k}\right)^{\gamma/2};
\end{eqnarray*}
where $C_1$, $C_2$, and $C_4$ are constants independent on $k$.
\end{proposition}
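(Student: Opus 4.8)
The plan is to smooth the compactly supported approximants $\phi^k$ furnished by Proposition\,\ref{exx5} by convolving them against a mollifier, at a scale chosen small enough that none of the estimates (iii), (iv), (v) degrade by more than a constant factor. Fix a nonnegative $\rho\in C_c^{\infty}(\RR^n)$ with $supp\,\rho\subseteq B(0,1)$ and $\int_{\RR^n}\rho(x)\,dx=1$, write $\rho_{\epsilon}(x)=\epsilon^{-n}\rho(x/\epsilon)$, and set
\begin{eqnarray*}
\psi^k=\phi^k\ast\rho_{\epsilon_k},\qquad \epsilon_k=k^{-1/2}.
\end{eqnarray*}
Since $\phi^k\in C_c(\RR^n)$ and $\rho_{\epsilon_k}\in C_c^{\infty}(\RR^n)$, the convolution $\psi^k$ is $C^{\infty}$ with compact support, so $\psi^k\in S(\RR^n)$; moreover $supp\,\psi^k\subseteq B(0,k+\epsilon_k)\subseteq B(0,2k)$, which is (i).

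Because $|y|\leq\epsilon_k\leq1$ on $supp\,\rho_{\epsilon_k}$, one has $1+|x-y|\sim1+|x|$ uniformly in $k$. I would then read off (iii) directly from property (iii) of Proposition\,\ref{exx5} by averaging $|\phi^k(x-y)|\lesssim(1+|x-y|)^{-(n+N+1)}$ against $\rho_{\epsilon_k}$. For (iv), I would difference under the convolution,
\begin{eqnarray*}
\psi^k(x+h)-\psi^k(x)=\int_{\RR^n}\left(\phi^k(x+h-y)-\phi^k(x-y)\right)\rho_{\epsilon_k}(y)\,dy,
\end{eqnarray*}
and apply property (iv) of Proposition\,\ref{exx5} together with $1+|x-y|\sim1+|x|$ to obtain the stated H\"older bound; the constants stay independent of $k$ precisely because $\epsilon_k\leq1$.

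The crux is (v). I would split $\psi^k-\phi=(\psi^k-\phi^k)+(\phi^k-\phi)$. The first term is the mollification error, controlled by the H\"older property (iv) of Proposition\,\ref{exx5}:
\begin{eqnarray*}
\left|\psi^k(x)-\phi^k(x)\right|=\left|\int_{\RR^n}\left(\phi^k(x-y)-\phi^k(x)\right)\rho_{\epsilon_k}(y)\,dy\right|\leq C\epsilon_k^{\gamma}(1+|x|)^{-(n+2\gamma)},
\end{eqnarray*}
and $\epsilon_k=k^{-1/2}$ converts $\epsilon_k^{\gamma}$ into $(1/k)^{\gamma/2}$. The second term is bounded by Formula\,(\ref{exexu1}), namely $C_3(1/k)^{\gamma/2}(1+|x|)^{-(n+N+1-\gamma/2)}$. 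Since both $n+2\gamma$ and $n+N+1-\gamma/2$ exceed $n+\frac{\gamma}{2}$ (the former because $2\gamma>\gamma/2$, the latter because $N\geq1\geq\gamma$), both decays dominate $(1+|x|)^{-(n+\gamma/2)}$, and summing gives the first inequality of (v) with a new constant $C_4$. Integrating $(1+|x|)^{-(n+\gamma/2)}$ over $\RR^n$, which converges because $\gamma/2>0$, gives the second inequality of (v); and taking the supremum in the first inequality shows $\|\psi^k-\phi\|_{\infty}\to0$, which is (ii).

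The argument is essentially routine mollification bookkeeping, so I expect no serious obstacle. The one point that requires care is the matching of scales: the mollification radius $\epsilon_k=k^{-1/2}$ must be calibrated so that $\epsilon_k^{\gamma}=(1/k)^{\gamma/2}$ reproduces exactly the rate demanded in (v), while the deliberately weakened spatial decay $(1+|x|)^{-(n+\gamma/2)}$ in (v), as opposed to the sharper exponent appearing in Proposition\,\ref{exx5}, is what allows the mollification error term, which only decays like $(1+|x|)^{-(n+2\gamma)}$, to be absorbed into the final bound.
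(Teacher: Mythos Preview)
Your proposal is correct and follows essentially the same route as the paper: take the compactly supported approximants $\phi^k$ from Proposition~\ref{exx5}, mollify them with a standard bump, and verify (i)--(v) by averaging the corresponding estimates for $\phi^k$. The only difference is the mollification scale: the paper takes $\tau=1/k$ rather than your $\epsilon_k=k^{-1/2}$, so its mollification error $|\psi^k-\phi^k|$ decays like $(1/k)^{\gamma}$ instead of $(1/k)^{\gamma/2}$, but this extra gain is immediately discarded when combined with the $(1/k)^{\gamma/2}$ bound on $|\phi^k-\phi|$, and the final estimate in (v) is the same either way.
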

\begin{proof}
Let $\{\phi^k(x): \phi^k(x)\in C_c(\RR^n)\}_{k=1}^{\infty}$ to be the sequence  in Proposition\,\ref{exx5}. Let \begin{eqnarray*}
\rho(x)=\left\{ \begin{array}{cc}
                             \vartheta\exp\left\{\frac{1}{|x|^2-1}\right\}, \ \ \hbox{for}\  |x|<1\\\\
                             0, \ \ \ \hbox{for}\  |x|\geq1,
                           \end{array}\right.
\end{eqnarray*}
where $\vartheta$ is a constant such that $\int\rho(x)dx=1$.
Let
$$\phi^{k,\tau}(x)=\int_{\RR^n}  \phi^{k}(x-t)\rho\left(\frac{t}{\tau} \right)\frac{dt}{\tau^n}.$$
Thus {\rm(i)} {\rm(ii)} and {\rm(iii)} hold. Next we prove {\rm(iv)}.
Notice  that $supp\,\rho(x)\subseteq\{x: |x|<1\}$. Let us set $\tau=\frac{1}{k}$ for $k\in\ZZ, k\geq1$, thus it is clear that
$|h|\lesssim1+|x-t|$ holds if  $|h|\lesssim1+|x|$. Thus the following holds for $|h|\lesssim1+|x|$:
\begin{eqnarray}\label{ee5}
\left|\phi^{k,\frac{1}{k}}(x+h)-\phi^{k,\frac{1}{k}}(x))\right|&=&\left|\int_{\RR^n}  \phi^{k}(x+h-t)\rho\left(kt \right)k^ndt-\int_{\RR^n}  \phi^{k}(x-t)\rho\left(kt \right)k^ndt\right|
\\ \nonumber &\leq&\left|\int_{\RR^n}\left(\phi^{k}(x+h-t)-\phi^{k}(x-t)\right)\rho\left(kt \right)k^ndt\right|
\\ \nonumber &\leq&C_2 \frac{|h|^{\gamma}}{(1+|x|)^{n+2\gamma}}.
\end{eqnarray}
We could also deduce the following inequality:
\begin{eqnarray*}
|\phi^{k,\frac{1}{k}}(x)-\phi(x)|&\leq& |\phi^{k,\frac{1}{k}}(x)-\phi^k(x)|+|\phi^k(x)-\phi(x)|
\\ \nonumber &\leq&C_2 \frac{|\frac{1}{k}|^{\gamma}}{(1+|x|)^{n+2\gamma}}+ C_3 \left(\frac{1}{k}\right)^{\gamma/2} \frac{1}{(1+|x|)^{n+N+1-\frac{\gamma}{2}}}
\\ \nonumber &\leq&C_4 \left(\frac{1}{k}\right)^{\gamma/2} \frac{1}{(1+|x|)^{n+\frac{\gamma}{2}}}.
\end{eqnarray*}
Thus it is clear that
\begin{eqnarray*}
\int_{\RR^n}|\phi^{k,\frac{1}{k}}(x)-\phi(x)|dx\lesssim_\gamma \left(\frac{1}{k}\right)^{\gamma/2}.
\end{eqnarray*}
At last, we could set $\psi^k(x)$ as
$$\psi^k(x)=\phi^{k,\frac{1}{k}}(x).$$
This proves our proposition.
\end{proof}

\begin{proposition}\label{up0}
$\phi(x)$ is a Lipschitz function $(\phi(x)\in \Lambda^{\gamma},\,\int\phi(x)dx\sim1)$,\,$(0<\gamma\leq1), \phi(x)>0$ without compact support in $\RR^n$ satisfying Formulas\,(\ref{up1}, \ref{up2}), then we could deduce the following inequality for $\frac{1}{1+\gamma}<p\leq1$:
$$\|f\|_{H^p(\RR^n)}\lesssim_{p, n, \gamma, \phi} \|(f*\phi)_{\nabla}\|_{L^p(\RR^n)}.$$
\end{proposition}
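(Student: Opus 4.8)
The plan is to reduce the estimate to the radial maximal function attached to a single Schwartz function and then to transfer the resulting bound from the smooth approximations of $\phi$ back to $\phi$ itself. Since $H^p(\RR^n)\cap L^1(\RR^n)$ is dense in $H^p(\RR^n)$, I would first assume $f\in L^1(\RR^n)\cap H^p(\RR^n)$. By Theorem\,\ref{abc7} the norm $\|f\|_{H^p(\RR^n)}$ is comparable to $\|M_\psi f\|_{L^p(\RR^n)}$ for one fixed $\psi\in S(\RR^n)$ with $\int\psi\sim1$, so it suffices to dominate $\|M_\psi f\|_{L^p(\RR^n)}$ by $\|(f*\phi)_{\nabla}\|_{L^p(\RR^n)}$. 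Let $\{\psi^m\}_m\subset S(\RR^n)$ be the sequence furnished by Proposition\,\ref{exx6}, which satisfies the decay bound (iii), the Lipschitz bound (iv), $\int\psi^m\sim1$ uniformly in $m$, and $\int_{\RR^n}|\psi^m-\phi|\,dx\lesssim (1/m)^{\gamma/2}$. Exactly as in the proof of Proposition\,\ref{pp1} I would build a Calder\'on-type reproducing formula for $\psi$ adapted to $\psi^m$: with $\varphi^k$ the Littlewood--Paley pieces used there and $k_0$ chosen so that $(\SF\psi^m)(2^{-k_0}\xi)\geq C>0$ on $|\xi|\le1$ uniformly in $m$ (possible because $\int\psi^m\sim1$ uniformly), set $(\SF\eta_m^k)(\xi)=\varphi^k(\xi)(\SF\psi)(\xi)/(\SF\psi^m)(2^{-k-k_0}\xi)$, so that $\psi=\sum_{k\ge0}\eta_m^k*(\psi^m)_{2^{-k-k_0}}$.

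The central point is to prove, uniformly in $m$, the decay estimate $\int_{\RR^n}|\eta_m^k(u)|(1+2^{k+k_0}|u|)^N\,du\le C2^{-k}$ that plays the role of (\ref{tan16}); granting it, the argument of (\ref{tan18})--(\ref{tan20}) together with Proposition\,\ref{tan} and the choice $N>n/p$ (available since $N>[n/p]+1$) yields $\|M_\psi f\|_{L^p(\RR^n)}\le C\|(f*\psi^m)_{\nabla}\|_{L^p(\RR^n)}$ with $C$ independent of $m$. I expect this uniform control of the $\eta_m^k$ to be the main obstacle, because the supports $B(0,2m)$ of the $\psi^m$ grow with $m$. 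The resolution is that (iii) and (iv) bound $\SF\psi^m$ and a fixed number of its derivatives \emph{uniformly} on the annulus through which $2^{-k-k_0}\xi$ ranges as $\xi$ runs over $\operatorname{supp}\varphi^k$, with $\SF\psi^m$ bounded below near the origin; this suffices to estimate $|\xi|^{\alpha}\partial_\xi^{\alpha'}(\SF\eta_m^k)(\xi)\lesssim 2^{-kM}$ as in (\ref{tan15}), whence the $L^1$ bound above by integration.

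Finally I would pass from $\psi^m$ to $\phi$. From Proposition\,\ref{exx6}(v) the kernel $\psi^m-\phi$ is dominated by $C(1/m)^{\gamma/2}(1+|x|)^{-n-\gamma/2}$, a radially decreasing $L^1$ majorant of norm $\lesssim (1/m)^{\gamma/2}$, so $|f*(\psi^m)_r(u)-f*\phi_r(u)|\lesssim (1/m)^{\gamma/2}Mf(u)$, where $M$ is the Hardy--Littlewood maximal operator. Running the weak-$(1,1)$ and density argument of Proposition\,\ref{pp1} (the passage through (\ref{tan23})--(\ref{tan24})) then produces a dense set on which $f*(\psi^{m_j})_r\to f*\phi_r$ pointwise in $(u,r)$, and by the reverse Fatou lemma $\overline{\lim}_{m}\,\|(f*\psi^m)_{\nabla}\|_{L^p(\RR^n)}^p\lesssim\|(f*\phi)_{\nabla}\|_{L^p(\RR^n)}^p$; the dominating $L^p$ function required for reverse Fatou is $f_\gamma^*$, since each $\psi^m$ satisfies (\ref{lp1}) and (\ref{lp2}) uniformly and hence $(f*\psi^m)_{\nabla}\lesssim f_\gamma^*$ by Proposition\,\ref{lp51}. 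Combining the three estimates gives $\|f\|_{H^p(\RR^n)}\sim\|M_\psi f\|_{L^p(\RR^n)}\lesssim\overline{\lim}_m\|(f*\psi^m)_{\nabla}\|_{L^p(\RR^n)}\lesssim\|(f*\phi)_{\nabla}\|_{L^p(\RR^n)}$, which is the desired inequality.
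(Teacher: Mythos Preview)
Your proposal is correct and follows essentially the same route as the paper: approximate $\phi$ by the Schwartz sequence $\{\psi^m\}$ of Proposition~\ref{exx6}, run the Calder\'on reproducing formula of Proposition~\ref{pp1} with $\psi^m$ in place of the compactly supported kernel (the decay condition~(iii) supplies the uniform bound $|\partial_\xi^{\alpha}(\SF\psi^m)(\xi)|\le C$ for $|\alpha|\le N$ that replaces~(\ref{ipip1})), and then pass to the limit via the weak-$(1,1)$/density argument using~(v). Your treatment is in fact slightly more explicit than the paper's at the final step: you name $f_\gamma^*$ (via Proposition~\ref{lp51} and the uniform bounds~(iii),~(iv)) as the $L^p$ majorant justifying the interchange $\overline{\lim}_m\int\le\int\overline{\lim}_m$, whereas the paper simply invokes ``the dominated convergence theorem'' without identifying the dominant.
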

\begin{proof}
Let  $f\in L^1(\RR^n)$ first. For $\phi(x)\in \Lambda^{\gamma}$, there exists sequence $\{\phi^m(x): \phi^m(x) \in S(\RR^n)\}_{m\in\NN}$ defined as Proposition\,\ref{exx6}.
It is clear that $\int\phi^m(x)dx\sim C$, where $C$ is a constant independent on $k$, thus $(\SF\phi^m)(2^{k_0}\xi)\geq C_p$, for $|\xi|\leq1$ where  $C_p$ is a constant independent on $m$, and $k_0$ is independent on $m$. Also by
Proposition\,\ref{exx6}, we could deduce that the following inequality holds:
\begin{eqnarray}\label{ipip1}
\left|\partial_\xi^\alpha(\SF\phi^m)(\xi)\right|\leq C_o
\end{eqnarray}
for $0\leq |\alpha|\leq N$\,($N\geq[\frac{n}{p}]+1$),  where $C_o$ is a constant independent on $m$.

Let
$\psi(x)\in S(\RR^n)$ to be fixed satisfying $\int\psi(x)dx\sim1$.
  $\varphi\in S(\RR^n)$ and $\varphi^k\in S(\RR^n)$  are defined as:
\begin{eqnarray*}
\left\{ \begin{array}{cc}
\varphi(\xi)=0 \ \ \hbox{for}\,|\xi|\geq1 \\
\\
\varphi(\xi)=1\ \ \hbox{for}\,|\xi|\leq1/2,
\end{array}\right.
\left\{ \begin{array}{cc}
\varphi^k(\xi)=\varphi(\xi) \ \ \hbox{for}\,k=0, \\
\\
\varphi^k(\xi)=\varphi(2^{-k}\xi)-\varphi(2^{1-k}\xi)\ \ \hbox{for}\,k\geq1.
\end{array}\right.
\end{eqnarray*}
Thus
$$1=\sum_{k=0}^{\infty}\varphi^k(\xi).$$
 Let $\eta_m^{k}$ to be defined as
$$(\SF\eta_m^{k})(\xi)=\frac{\varphi^k(\xi)(\SF\psi)(\xi)}{(\SF\phi^m)(2^{-k-k_0}\xi)},$$
where $\SF$ denotes the Fourier transform.
Then we could obtain that:
\begin{eqnarray*}
(\SF\psi)(\xi)&=&\sum_{k=0}^{\infty}\frac{\varphi^k(\xi)(\SF\psi)(\xi)}{(\SF\phi^m)(2^{-k-k_0}\xi)}(\SF\phi^m)(2^{-k-k_0}\xi)\\ \nonumber &=&\sum_{k=0}^{\infty}(\SF\eta_m^{k})(\xi)(\SF\phi^m)(2^{-k-k_0}\xi).
\end{eqnarray*}
Thus
\begin{eqnarray}\label{tan17y}
\psi(x)=\sum_{k=0}^{\infty}\eta_m^{k}\ast\phi^m_{2^{-k-k_0}}(x).
\end{eqnarray}
Notice   that the following hold
\begin{eqnarray}\label{tan14y}
\sup_{\xi\in\RR^n}\left||\xi|^{\alpha}\partial_{\xi}^{\alpha'}(\varphi^k(\xi)(\SF\psi)(\xi))\right|\lesssim_{\alpha, \alpha', M} 2^{-kM}\ \hbox{for\ any\ }M>0,\,\, 0\leq |\alpha|,
\end{eqnarray} where $C_{ \alpha'}$ is a
constant independent on $m$, together with Formula\,(\ref{ipip1}),  we could deduce that
\begin{eqnarray}\label{tan15y}
\sup_{\xi\in\RR^n}\left||\xi|^{\alpha}\partial_{\xi}^{\alpha'}((\SF\eta_m^{k})(\xi))\right|\leq C_{\alpha, \alpha', k_0, N}2^{-kN}\ \hbox{for}\,N\geq[\frac{n}{p}]+1, \ \ 0\leq |\alpha|\leq N
\end{eqnarray}
where $C_{\alpha, \alpha', k_0, N}$ is a
constant independent on $m$ and $k$. Then
\begin{eqnarray}\label{tan16y}
\left|\int_{\RR^n}\eta_m^{k}\left(u\right)\left(1+2^{k+k_0}|u|\right)^{N}du \right| \leq C_{k_0, N}2^{-k},
\end{eqnarray}
where $C_{k_0, N}$ is a constant independent on $m$.
 For $f\in  L^1(\RR^n)$, from Formulas\,(\ref{tan17y}), we could have
\begin{eqnarray}\label{tan18y}
M_{\psi}f(x)&=& \sup_{r>0}\left|\int_{\RR^n} f(y)\frac{1}{r^n}\psi\left(\frac{x-y}{r} \right)dy\right|
\\&=&\nonumber C\sup_{r>0}\sum_{k=0}^{+\infty}\left|\int_{\RR^n}\int_{\RR^n} f(y)\eta_m^{k}\left(\frac{s}{r}\right)\frac{1}{r^n}\phi^m\left(\frac{x-y-s}{2^{-k-k_0}r}\right)\frac{ds}{(2^{-k-k_0}r)^{n}}dy\right|
\\&\leq&\nonumber C \sum_{k=0}^{+\infty} \left|\int_{\RR^n}\eta_m^{k}\left(\frac{s}{r}\right)\left(1+\frac{|s|}{2^{-k-k_0}r} \right)^{N}\frac{ds}{r^n}\right| \sup_{ r>0, s\in\RR^n}\left|\int_{\RR^n} f(y)\phi^m\left(\frac{x-y-s}{r} \right)\left(1+\frac{|s|}{r} \right)^{-N}\frac{dy}{r^n}\right|,
\end{eqnarray}
where $C$ is a constant independent on $m$. From Formulas\,(\ref{tan16y}, \ref{tan18y}) we could obtain:
\begin{eqnarray}\label{tan27y}
M_{\psi}f(x)&\lesssim&  \sup_{ r>0, s\in\RR^n}\left|\int_{\RR^n} f(y)\phi^m\left(\frac{x-y-s}{r} \right)\left(1+\frac{|s|}{r} \right)^{-N}\frac{dy}{r^n}\right|
\\&\lesssim&\nonumber   \left(\sup_{0\leq |s|< r}+\sum_{k=1}^{\infty}\sup_{ 2^{k-1}r\leq |s|< 2^{k}r}\right)\left|\int_{\RR^n} f(y)\phi^m\left(\frac{x-y-s}{r} \right)\left(1+\frac{|s|}{r} \right)^{-N}\frac{dy}{r^n}\right|
\\&\lesssim&\nonumber   \sum_{k=0}^{+\infty}2^{-(k-1)N}\sup_{ 0\leq |s|< 2^{k}r}\left|\int_{\RR^n} f(y)\phi^m\left(\frac{x-y-s}{r} \right)\frac{dy}{r^n}\right|.
\end{eqnarray}
Formula\,(\ref{no1}) leads to
\begin{eqnarray}\label{tan19y}
& &\int_{\RR^n}\sup_{0\leq |s|< 2^{k}r}\left|\int_{\RR^n}f(y)\phi^m\left(\frac{x-y-s}{r}\right)\frac{dy}{r^n}\right|^pdx\\ \nonumber&\leq& C\left(1+2^k\right)^n\int_{\RR^n}\sup_{0\leq |s|<r}\left|\int_{\RR^n}f(y)\phi^m\left(\frac{x-y-s}{r}\right)\frac{dy}{r^n}\right|^pdx .
\end{eqnarray}
Notice that $N\geq[\frac{n}{p}]+1$, thus  Formulas\,(\ref{tan27y}, \ref{tan19y}) lead to
\begin{eqnarray}\label{tan20y}
 \int_{\RR^n} |M_{\psi}f(x)|^pdx\leq C \int_{\RR^n}\sup_{0\leq |s|<r}\left|\int_{\RR^n}f(y)\phi^m\left(\frac{x-y-s}{r}\right)\frac{dy}{r^n}\right|^pdx ,
 \end{eqnarray}
where $C$ is a constant independent on $m$. We use $\left(F^m f\right)(x, r)$ and $\left(F f\right)(x, r)$ to denote as:
\begin{eqnarray*}
 \left(F^m f\right)(x, r)=\int_{\RR^n}f(y)\phi^m\left(\frac{x-y}{r}\right)\frac{dy}{r^n},\ \ \ \  \left(F f\right)(x, r)=\int_{\RR^n}f(y)\phi\left(\frac{x-y}{r}\right)\frac{dy}{r^n}.
\end{eqnarray*}

Thus by Proposition\,\ref{exx6}(v), we could deduce the following inequality:
\begin{eqnarray}
\left|\left(F^m f\right)(u, r)-\left(Ff\right)(u, r)\right|&\leq& \int_{\RR^n} \left|f(y)\right|\left|\phi^m\left(\frac{u-y}{r}\right)-\phi\left(\frac{u-y}{r}\right)\right|\frac{dy}{r^n}
\\&\leq&\nonumber C \int_\RR \left|f(y)\right|\left(\frac{1}{m}\right)^{\gamma/2} \Big(1+\frac{|u-y|}{r}\Big)^{-n-\frac{\gamma}{2}}\frac{dy}{r^n}
\\&\leq&\nonumber C\sum_{k=0}^{+\infty} \Big(2^k\Big)^{-n-\frac{\gamma}{2}} 2^{nk} |M f(u)|\left(\frac{1}{m}\right)^{\gamma/2}
\\&\leq&\nonumber C |M f(u)|\left(\frac{1}{m}\right)^{\gamma/2}
\end{eqnarray}
where C is dependent on $\gamma$ and   $M$ is the Hardy-Littlewood Maximal Operator.
Let us set  $\delta_m(u)$ as: $$\delta_m(u)=\left|\left(F^m f\right)(u, r)-\left(Ff\right)(u, r)\right|.$$
Thus we could deduce the following:
$$\left\{x: \delta_m(x)>\alpha\right\}\subseteq \left\{x: M f(x) > \frac{1}{c} m^{\gamma/2}\alpha \right\}\ \ \ \hbox{for\,some\,constant\,c}.$$
Notice that $M$ is weak-(1, 1) bounded, thus the following holds for any $\alpha>0$:
$$\left|\left\{x: \delta_m(x)>\alpha\right\}\right|\leq \frac{c}{\alpha} \|f\|_{L^1(\RR^n)}\left(\frac{1}{m}\right)^{\gamma/2}.$$
Thus we could obtain:
$$\lim_{m\rightarrow+\infty}\left|\left\{x: \delta_m(x)>\alpha\right\}\right|=0.$$
Thus there exists a sequence $\{m_j\}\subseteq \{m\}$ such that the following holds: $$\lim_{m_j\rightarrow+\infty}\left(F^{m_j}f\right)(u, r)=\left(Ff\right)(u, r),\ \ \ a.e.u\in\RR^n $$ for $f\in L^1(\RR^n).$
Let us set $E$ as: $$E=\{u\in\RR^n: \lim_{m_j\rightarrow+\infty}\left(F^{m_j}f\right)(u, r)=\left(Ff\right)(u, r)\}.$$
Thus it is clear that  E is  dense in $\RR^n$. For any $x_0\in\RR^n$, there exists a $(u_0, r_0)$ with $r_0>0$, $u_0\in\RR^n$, $|u_0- x_0|<r_0$ such that the following holds:
$$\left|\left(F^{m_j}f\right)(u_0, r_0)\right| \geq \frac{1}{2} \sup_{|x_0-u|<r}|\left(F^{m_j} f\right)(u, r)|.$$
Notice  that $\left(F^{m_j}f\right)(u, r_0)$ is a continuous function in $u$ variable and E is  dense in $\RR^n$, thus there exists a $\widetilde{u}_0\in E$ with $|\widetilde{u}_0- x_0|<r_0$ such that
$$\left|\left(F^{m_j}f\right)(\widetilde{u}_0, r_0)\right|\geq \frac{1}{2}\left|\left(F^{m_j}f\right)(u_0, r_0)\right| \geq \frac{1}{4} \sup_{|x_0-u|<r}|\left(F^{m_j} f\right)(u, r)|.$$
Thus we could deduce that for any $x_0\in\RR^n$
\begin{eqnarray}\label{tan23y}
\sup_{\{u\in E: |x_0-u|<r\}} \left|\left(F^{m_j}f\right)(u, r)\right|\sim \sup_{\{u\in \RR^n: |x_0-u|<r\}} \left|\left(F^{m_j}f\right)(u, r)\right|.
\end{eqnarray}
Let the $\{Q_k\}_k$ to be the cubes with unit length satisfying $Q_i\bigcap Q_j=\emptyset$ when $i\neq j$, and $\bigcup_kQ_k=\RR^n$. From
Formula\,(\ref{tan23y}) together with the dominated convergence theorem, we could conclude:
\begin{eqnarray}\label{tan24y}
\overline{\lim}_{m_j\rightarrow+\infty}\int_{\RR^n} \sup_{|x-u|<r}\left|\left(F^{m_j} f\right)(u, r)\right|^pdx &\sim&\nonumber \overline{\lim}_{m_j\rightarrow+\infty}\int_{\RR^n} \sup_{\{u\in E: |u-x|<r\}} \left|\left(F^{m_j}f\right)(u, r)\right|^pdx
\\&\leq&\nonumber C \int_{\RR^n} \overline{\lim}_{m_j\rightarrow+\infty} \sup_{\{u\in E: |u-x|<r\}} \left|\left(F^{m_j}f\right)(u, r)\right|^pdx
\\&\leq&\nonumber C  \int_{\RR^n}\sup_{\{u\in E: |u-x|<r\}} \left|\left(Ff\right)(u, r)\right|^pdx
\\&\leq& C  \int_{\RR^n}\sup_{\{u\in \RR^n: |u-x|<r\}} \left|\left(Ff\right)(u, r)\right|^pdx.
\end{eqnarray}
By Formulas\,(\ref{tan20y}, \ref{tan24y}), we could deduce that the following inequality holds for $f\in L^1(\RR^n)$
\begin{eqnarray}\label{tan25yy}
\|f\|_{H^p(\RR^n)}\lesssim_{p, n, \gamma, \phi}\|(f*\phi)_{\nabla}\|_{L^p(\RR^n)} .
\end{eqnarray}
Notice that $H^p(\RR^n)\bigcap L^1(\RR^n)$ is dense in $H^p(\RR^n)$. Then   we could deduce that the Formula\,(\ref{tan25yy}) holds for  $f\in H^p(\RR^n)$. This proves our proposition.
\end{proof}

Thus from Proposition\,\ref{no11}, Proposition\,\ref{up0} and Proposition\,\ref{lp51}, we could obtain the following theorem:
\begin{theorem}\label{upoo}
$\phi(x)$ is a Lipschitz function $(\phi(x)\in \Lambda^{\gamma},\,\int\phi(x)dx\sim1)$,\,$(0<\gamma\leq1), \phi(x)>0$ without compact support in $\RR^n$ satisfying Formulas\,(\ref{up1}, \ref{up2}), then we could deduce the following inequality for $\frac{1}{1+\gamma}<p\leq1$,  $f\in H^p(\RR^n)$:
\begin{eqnarray}\label{tan28y}
\|f\|_{H^p(\RR^n)}\sim_{p, \gamma, \phi}\|(f*\phi)_{\nabla}\|_{L^p(\RR^n)}\sim_{ p,  \gamma, \phi} \|(f*\phi)_{+}\|_{L^p(\RR^n)}.
\end{eqnarray}
\end{theorem}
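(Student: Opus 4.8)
The plan is to assemble the theorem directly from the three propositions that precede it, since each half of the asserted equivalence has essentially been isolated already. The first thing I would check is that the standing hypotheses are compatible. Conditions (\ref{up1})--(\ref{up2}) are in fact stronger than (\ref{lp1})--(\ref{lp2}) (equivalently (\ref{lp6})--(\ref{lp7})): since $N\geq[n/p]+1\geq1\geq\gamma$, the decay $(1+|x|)^{-(n+N+1)}$ in (\ref{up1}) dominates the decay $(1+|x|)^{-(n+\gamma)}$ in (\ref{lp1}), while the H\"older bound (\ref{up2}) is literally (\ref{lp2}). Hence any proposition whose hypotheses are (\ref{lp1})--(\ref{lp2}) or (\ref{lp6})--(\ref{lp7}) applies verbatim to the present $\phi$, and I may use all three propositions simultaneously for $\frac{1}{1+\gamma}<p\leq1$.

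Working first with $f\in L^1(\RR^n)\cap H^p(\RR^n)$, I would obtain the two-sided comparison with the non-tangential maximal function. Proposition\,\ref{lp51} supplies $\|(f*\phi)_{\nabla}\|_{L^p}\lesssim\|f\|_{H^p}$, and Proposition\,\ref{up0} supplies the reverse bound $\|f\|_{H^p}\lesssim\|(f*\phi)_{\nabla}\|_{L^p}$; this is the direction in which the positivity $\phi>0$ and the normalization $\int\phi\,dx\sim1$ are used, through the Calder\'on-type reproducing decomposition built in that proof. Combining them yields
\begin{eqnarray*}
\|f\|_{H^p(\RR^n)}\sim_{p,\gamma,\phi}\|(f*\phi)_{\nabla}\|_{L^p(\RR^n)}.
\end{eqnarray*}

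The value of this step is that, recalling the identity $\|f\|_{H^p}=\|f_{\gamma}^*\|_{L^p}$ from (\ref{abc6}), the equivalence just obtained is precisely the hypothesis demanded by Proposition\,\ref{no11}. I would therefore invoke Proposition\,\ref{no11} to deduce $\|(f*\phi)_{\nabla}\|_{L^p}\leq C\|(f*\phi)_{+}\|_{L^p}$ with $C=C(p,\gamma)$. The opposite inequality $\|(f*\phi)_{+}\|_{L^p}\leq\|(f*\phi)_{\nabla}\|_{L^p}$ is immediate from the definitions, since for each fixed $t$ the choice $u=x$ satisfies $|x-u|<t$, so the radial supremum is dominated pointwise by the non-tangential one. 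This closes the chain $\|f\|_{H^p}\sim\|(f*\phi)_{\nabla}\|_{L^p}\sim\|(f*\phi)_{+}\|_{L^p}$ for every $f\in L^1\cap H^p$.

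Finally I would pass to general $f\in H^p(\RR^n)$ by density: since $H^p(\RR^n)\cap L^1(\RR^n)$ is dense in $H^p(\RR^n)$ and each of the three quantities is comparable along the approximating sequence, the equivalence (\ref{tan28y}) extends to all of $H^p(\RR^n)$. I do not anticipate a genuine analytic obstacle in this final assembly, because the substantive estimates have already been carried out inside Propositions\,\ref{lp51}, \ref{up0}, and \ref{no11}. The only points that need care are bookkeeping ones: verifying that the hypotheses of each proposition are met by the present $\phi$ across the full range $\frac{1}{1+\gamma}<p\leq1$ (the auxiliary parameter $\alpha$ having already been removed within Proposition\,\ref{no11}), and making the limiting/density argument rigorous, which is the step most likely to conceal a subtlety.
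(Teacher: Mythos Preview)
Your proposal is correct and follows essentially the same approach as the paper, which also derives Theorem\,\ref{upoo} directly by combining Propositions\,\ref{lp51}, \ref{up0}, and \ref{no11}. Your additional care in checking that (\ref{up1})--(\ref{up2}) imply (\ref{lp1})--(\ref{lp2}) and in spelling out the trivial pointwise inequality $(f*\phi)_{+}\leq(f*\phi)_{\nabla}$ is more detailed than the paper's one-line assembly, but the logic is identical.
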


\textbf{Declarations}:

\textbf{Availability of data and materials}:Not applicable

\textbf{Competing interests}: The authors declare that they have no competing interests

\textbf{Funding}:Not applicable

\textbf{Authors’ contributions}:The author ZhuoRan Hu finished this paper alone.

\textbf{Acknowledgements}:Not applicable

\textbf{Author}: ZhuoRan Hu

\end{document}